\documentclass[12pt]{amsart}
\usepackage{amssymb,amsmath,amsthm,bm}
\usepackage[colorinlistoftodos,prependcaption,textsize=tiny]{todonotes}
\usepackage{esint}
\oddsidemargin=-.0cm
\evensidemargin=-.0cm
\textwidth=16cm
\textheight=22cm
\topmargin=0cm
 \definecolor{darkblue}{RGB}{0,0,160}
\usepackage{fouriernc} 
%\usepackage{mathptmx}
%  \DeclareSymbolFont{cmsymbols}{OMS}{cmsy}{m}{n}
%\SetSymbolFont{cmsymbols}{bold}{OMS}{cmsy}{b}{n}
%\DeclareSymbolFontAlphabet{\mathcal}{cmsymbols}  
%\usepackage{newtxmath}
%\usepackage[lining]{libertine}   
%\usepackage{cabin}
%\usepackage[libertine]{newtxmath}
\usepackage[colorlinks=true,allcolors=darkblue]{hyperref}
\DeclareSymbolFont{usualmathcal}{OMS}{cmsy}{m}{n}
\DeclareSymbolFontAlphabet{\mathcal}{usualmathcal}

\usepackage[T1]{fontenc}

\usepackage{color}

%%%%%%%%%%%%%%%%%%%%%%%%%%%%%%%%%%%%%%%%%%%%
% DEFS

\def\d{{\rm d}}

\def\D {{\mathcal D}}

\def \l {\langle}
\def \r {\rangle}

\def \and{\qquad\text{and}\qquad}

%%%%%%%%%%%%%%%%%%%%%%%%%%%%%%%%%%%%%%%%%%%%
\newcounter{thms}

\newcounter{other}
\numberwithin{other}{section}
\newtheorem{proposition}[other]{Proposition}
\newtheorem{theorem}[thms]{Theorem}
\newtheorem*{theorem*}{Theorem}
\newtheorem*{proposition*}{Proposition}
\newtheorem{corollary}{Corollary}
\numberwithin{corollary}{thms}
\newtheorem{lemma}[other]{Lemma}
\theoremstyle{definition}
\newtheorem{remark}[other]{Remark}

%%%%%%%
\def\vint_#1{\mathchoice%
      {\mathop{\kern 0.2em\vrule width 0.6em height 0.69678ex depth -0.58065ex
              \kern -0.8em \intop}\nolimits_{\kern -0.4em#1}}%
      {\mathop{\kern 0.1em\vrule width 0.5em height 0.69678ex depth -0.60387ex
              \kern -0.6em \intop}\nolimits_{#1}}%
      {\mathop{\kern 0.1em\vrule width 0.5em height 0.69678ex depth -0.60387ex
              \kern -0.6em \intop}\nolimits_{#1}}%
      {\mathop{\kern 0.1em\vrule width 0.5em height 0.69678ex depth -0.60387ex
              \kern -0.6em \intop}\nolimits_{#1}}}
\def\vintslides_#1{\mathchoice%
      {\mathop{\kern 0.1em\vrule width 0.5em height 0.697ex depth -0.581ex
              \kern -0.6em \intop}\nolimits_{\kern -0.4em#1}}%
      {\mathop{\kern 0.1em\vrule width 0.3em height 0.697ex depth -0.604ex
              \kern -0.4em \intop}\nolimits_{#1}}%
      {\mathop{\kern 0.1em\vrule width 0.3em height 0.697ex depth -0.604ex
              \kern -0.4em \intop}\nolimits_{#1}}%
      {\mathop{\kern 0.1em\vrule width 0.3em height 0.697ex depth -0.604ex
              \kern -0.4em \intop}\nolimits_{#1}}}

\newcommand{\abs}[1]{\ensuremath{\vert#1\vert}}

\newcommand{\La}{\langle }
\newcommand{\Ra}{\rangle }

\newcommand{\aveint}[2]{\mathchoice%
      {\mathop{\kern 0.2em\vrule width 0.6em height 0.69678ex depth -0.58065ex
              \kern -0.8em \intop}\nolimits_{\kern -0.45em#1}^{#2}}%
      {\mathop{\kern 0.1em\vrule width 0.5em height 0.69678ex depth -0.60387ex
              \kern -0.6em \intop}\nolimits_{#1}^{#2}}%
      {\mathop{\kern 0.1em\vrule width 0.5em height 0.69678ex depth -0.60387ex
              \kern -0.6em \intop}\nolimits_{#1}^{#2}}%
      {\mathop{\kern 0.1em\vrule width 0.5em height 0.69678ex depth -0.60387ex
              \kern -0.6em \intop}\nolimits_{#1}^{#2}}}

\renewcommand*{\cdots}{%
  \mathinner{{\cdotp}{\cdotp}{\cdotp}}%
}

\numberwithin{equation}{section}

\title[Two-weight inequalities for multilinear commutators]{Two-weight inequalities for multilinear commutators}

 \author[Ishwari Kunwar]{Ishwari Kunwar}
 \address{\noindent School of Mathematics, Georgia Institute of Technology, \newline \indent 686 Cherry Street, Atlanta, GA 30332, USA}
 \email{ikunwar3@math.gatech.edu}

 \author[Yumeng Ou]{Yumeng Ou}
 \address{\noindent Department of Mathematics, Massachusetts Institute of Technology, \newline \indent  77 Massachusetts Avenue, Cambridge, MA 02139, USA  }
\email{yumengou@mit.edu} 

\subjclass[2010]{Primary: 42B20; Secondary: 42B25}
\keywords{Multilinear Calder\'on-Zygmund operators, multilinear dyadic operators, sparse domination, commutators, weighted BMO}
%\thanks{???}

\begin{document}
\begin{abstract} 
We prove Bloom type two-weight inequalities for commutators of multilinear singular integral operators including Calder\'on-Zygmund operators and their dyadic counterparts. Such estimates are further extended to a general higher order multilinear setting. The proof involves a pointwise sparse domination of multilinear commutators.
\end{abstract}
\maketitle
 
\section{Introduction and statement of main results}

In this article, we study commutators of certain BMO functions and singular integral operators in the multilinear setting. Our basic object is 
\[
[b,T]_{\beta}(f_1,\cdots,f_m):=bT(f_1,\cdots,f_m)-T(f_1,\cdots,bf_{\beta},\cdots,f_m),
\]where $T$ is a $m$-linear operator acting on functions on $\mathbb{R}^n$, $m\geq 1$, and $b$ is understood as the pointwise multiplication operator by function $b$. 

We prove two-weight inequalities, first of their kind in the multilinear setting, for commutators of this type and its full multilinear, higher order extensions. More precisely, our goal is to understand how the operator norm of the commutator acting on weighted $L^p$ spaces is controlled by certain BMO norms (determined by the weights) of the symbol function $b$, and we are particularly interested in the cases when $T$ is a multilinear Calder\'on-Zygmund operator or its dyadic counterparts. 

Given $\vec{k}=(k_1,\ldots,k_m)$. $\forall 1\leq j\leq m$, assume $k_j\geq 0$, and let $\vec{b_j}=(b_j^1,\ldots,b_j^{k_j})$ where each $b_j^i$ is a function on $\mathbb{R}^n$. A general multilinear higher order commutator can be defined as
\begin{equation}\label{DefComm}
\begin{split}
&C^{\vec{k},m}_{\{\vec{b_j}\}}(T)\\
:=&\left[b_{m}^{k_m},\cdots,\left[b_m^1,\left[b_{m-1}^{k_{m-1}},\cdots,\left[b_{m-1}^1,\cdots,\left[b_2^1,\left[b_1^{k_1},\cdots,\left[b_1^1,T\right]_1\cdots\right]_{1}\right]_2\cdots\right]_{m-1}\cdots\right]_{m-1}\right]_m\cdots\right]_m.
\end{split}
\end{equation}Note that $C^{\vec{k},m}_{\{\vec{b_j}\}}(T)$ is also a $m$-linear operator, and it is invariant under permutations of $\left\{b_j^i\right\}_{i=1,\ldots,k_j}$, $\forall 1\leq j\leq m$. In the definition above, $k_j=0$ for some $j\in\{1,\ldots,m\}$ simply means that there is no commutator structure in the $j$-th component. It is easy to see that $[b,T]_{\beta}$ defined at the beginning is a special case of $C^{\vec{k},m}_{\{\vec{b_j}\}}(T)$.

The main results of the article are the following. We first obtain a two-weight estimate for first order multilinear commutators (Theorem \ref{basethm}), i.e. when all $k_j\leq 1$. We then develop an abstract scheme (Theorem \ref{multi2}) that bootstraps from there to the higher order case. Such a two-weight norm inequality is new even in the first order case for multilinear commutators. Moreover, lower bound estimates for multilinear commutators are also discussed (Theorem \ref{lower bound}).

Commutator estimates in terms of BMO norms of the symbol functions have attracted a great amount of interest in the past decades, as it gives straightforward characterization of BMO spaces, implies weak factorization of Hardy spaces and Div-Curl estimates, and is closely connected to Hankel operators in operator theory. We refer the reader to \cite{CRW, FS, OPS} and the references therein for more detailed discussion of these connections in the unweighted linear theory. In the linear setting (i.e. when $T$ is a linear operator), weighted and two-weight type inequalities have been recently studied in works such as \cite{CPP, HLW, HLW2, HPW}. In the multilinear setting, such estimates have also been considered where the natural class of weights and structure of the commutators are much more complicated. Lerner et al. \cite{LOPTT} proved one-weight estimates for certain first order multilinear commutators. In the work of P\'erez et al. \cite{PPTT}, one-weight estimates for iterated multilinear commutators (the full first order version of (\ref{DefComm}) with one symbol function $b_j$ appearing in each component $j$) are considered too. There is also another independent work of Tang \cite{Tang} who treated first order multilinear commutators but only for a subclass of multilinear weights: products of classical weights. Our result seems to be the first of its kind that extends such estimates to a setting where challenges from multilinear, iterated higher order, and two-weight are overcome simultaneously. 

To begin with, we introduce relevant classes of weights and BMO spaces. A positive, locally integrable function $w$ is called a Muckenhoupt $A_p$ weight if
\[
[w]_{A_p}:=\sup_{Q}\left(\fint_Q w(x) \,\d x \right)\left(\fint_Q w(x)^{1-p'}\,\d x\right)^{p-1}<\infty,\quad 1<p<\infty,
\]where the supreme is taken over all cubes $Q\subset\mathbb{R}^n$. When studying multilinear singular integrals, one usually works with weight vectors $\vec{w}=(w_1,\ldots,w_m)$ where each $w_j$ is a positive function. Let $\vec{p}=(p_1,\ldots,p_m)$ with $1<p_j<\infty$ and $1/p=1/p_1+\cdots+1/p_m$. $\vec{w}$ is said to be a \emph{multilinear $A_{\vec{p}}$ weight} if
\[
[\vec{w}]_{A_{\vec{p}}}^{1/p}:=\sup_{Q\subset \mathbb{R}^n}\left(\fint_Q \nu_{\vec{w}}\right)^{1/p} \prod_{j=1}^m\left(\fint_Q w_j^{1-p_j'}\right)^{1/p_j'}<\infty,
\]where
\[
\nu_{\vec{w}}:=\prod_{j=1}^mw_j^{p/p_j}.
\]A particular example of such weights is $\vec{w}$ with $w_j\in A_{p_j}$, $\forall j$, which is referred to as a \emph{product multiple weight}. The weight class $A_{\vec{p}}$ was first introduced in \cite{LOPTT} where the authors show that it is the correct class of weights in multilinear Calder\'on-Zygmund theory and is tied to the multilinear maximal function. In general, if $\vec{w}\in A_{\vec{p}}$, $w_j$ may not be a locally integrable function for any $j$, but instead,
\[
\vec{w}\in A_{\vec{p}}\iff
\begin{cases}
w_j^{1-p_j'}\in A_{mp_j'},\quad j=1,\ldots,m,\\
\nu_{\vec{w}}\in A_{mp}.
\end{cases}
\]

In addition to the classical BMO space, we define the following weighted BMO space associated to weight $\nu$ normed by
\[
\|b\|_{\text{BMO}(\nu)}:=\sup_{Q\subset\mathbb{R}^n}\frac{1}{\nu(Q)}\int_Q|b-\l b\r_Q|\,\d x,
\]where $\nu(Q):=\int_{Q} \nu(x)\,\d x$. Note that $\l b \r_Q$ in the above still denotes the average value of $b$ with respect to Lebesgue measure. This space was first introduced by Muckenhoupt and Wheeden \cite{MW} and Garc\'ia-Cuerva [GC] independently, and is often referred to as the \emph{Bloom BMO} space due to its role in connection with two-weight estimates for commutators studied by Bloom \cite{Bloom}. There are also dyadic versions of BMO and weighted BMO associated to certain dyadic grid $\mathcal{D}$, which are denoted as $\text{BMO}_{\mathcal{D}}$ and $\text{BMO}_{\mathcal{D}}(\nu)$. These norms are defined similarly as above but with supremum taken over only dyadic cubes.

Our first theorem provides a quantitative two-weight upper bound for the first order multilinear commutator. 
\begin{theorem}\label{basethm}
Let $\vec{p}=(p_1,\ldots,p_m)$, $1<p_j<\infty$ and $1/p=1/p_1+\cdots+1/p_m$. Let $I=\{i_1,\ldots,i_\ell\}\subset\{1,\ldots,m\}$ and $b_{i_s}\in\text{BMO}(\nu_{i_s})$, $s=1,\ldots,\ell$. For any multilinear $A_{\vec{p}}$ weights $\vec{\mu}=(\mu_1,\ldots,\mu_m)$ and $\vec{\lambda}=(\lambda_1,\ldots,\lambda_m)$ such that $\mu_{i_s}, \lambda_{i_s}\in A_{p_{i_s}}$, $\forall s=1,\ldots,\ell$, while $\mu_j=\lambda_j$, $\forall j\notin I$. Denote $\vec{q}=(p_j)_{j\notin I}$, $\vec{w}=(\mu_j)_{j\notin I}$ and assume that the $(m-\ell)$-linear weight $\vec{w}\in A_{\vec{q}}$. Then, let $\vec{\nu}=(\nu_{i_1},\ldots,\nu_{i_\ell})$ satisfy $\nu_{i_s}=(\mu_{i_s}/\lambda_{i_s})^{1/p_{i_s}}$, there holds
\[
\|[b_{i_1},\cdots,[b_{i_\ell},T]_{i_\ell}\cdots]_{i_1}(f_1,\cdots,f_m)\|_{L^p(\nu_{\vec\lambda})}\lesssim C(\vec\mu,\vec\lambda,\vec{p})\prod_{s=1}^\ell\|b_s\|_{\text{BMO}(\nu_{i_s})}\prod_{j=1}^m\|f_j\|_{L^{p_j}(\mu_j)},
\]where $T$ is a $m$-linear Calder\'on-Zygmund operator,  Haar multiplier or paraproduct (with respect to any dyadic grid), and
\[
C(\vec{\mu},\vec{\lambda},\vec{p}):=\left(\prod_{s=1}^\ell[\mu_{i_s}]_{A_{p_{i_s}}}^{\max(1,\frac{1}{p_{i_s}-1})}[\lambda_{i_s}]_{A_{p_{i_s}}}^{\frac{1}{p_{i_s}}\max(p_{i_s}, q,p'_1,\ldots,p'_m)}\right)[\vec{w}]_{A_{\vec{q}}}^{\frac{1}{q}\max(q,p'_1,\ldots,p'_m)},\quad 1/q:=\sum_{j\notin I}1/q_j.
\]
\end{theorem}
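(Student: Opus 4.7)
The plan is to deduce the theorem from a pointwise sparse domination of the iterated multilinear commutator, followed by weighted estimates for the resulting sparse forms. This is the multilinear higher-order analogue of the Bloom-type strategy familiar from the linear theory. For each allowed $T$ (multilinear Calder\'on--Zygmund operator, Haar multiplier, or paraproduct) a sparse bound of the form $|T(\vec f)(x)|\lesssim \sum_{\mathcal{D}}\sum_{Q\in\mathcal{S}}\prod_j\langle|f_j|\rangle_Q\mathbf{1}_Q(x)$ is known. Iterating the commutator structure and, on each sparse cube, expanding each $b_{i_s}$ as $(b_{i_s}-\langle b_{i_s}\rangle_Q)+\langle b_{i_s}\rangle_Q$ produces $2^\ell$ sparse forms indexed by subsets $J\subseteq I$:
\[
\bigl|[b_{i_1},\cdots,[b_{i_\ell},T]_{i_\ell}\cdots]_{i_1}(\vec f)(x)\bigr|\lesssim \sum_{\mathcal{D}}\sum_{J\subseteq I}\A_{\mathcal{S},J}(\vec f,\vec b)(x),
\]
where
\[
\A_{\mathcal{S},J}(\vec f,\vec b)(x):=\sum_{Q\in\mathcal{S}}\prod_{s\in J}\bigl|b_{i_s}(x)-\langle b_{i_s}\rangle_Q\bigr|\prod_{s\in I\setminus J}\bigl\langle|b_{i_s}-\langle b_{i_s}\rangle_Q||f_{i_s}|\bigr\rangle_Q \prod_{j\notin I}\langle|f_j|\rangle_Q\,\mathbf{1}_Q(x).
\]
Splitting into $J$ versus $I\setminus J$ isolates the two ways a BMO oscillation can be absorbed into weighted norms, and reduces the commutator bound to a finite sum of sparse-type estimates.

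I would then control $\|\A_{\mathcal{S},J}\|_{L^p(\nu_{\vec\lambda})}$ by duality against $g\in L^{p'}(\nu_{\vec\lambda}^{1-p'})$. The factors for $j\notin I$ are handled by the sharp multilinear sparse bound in the $(m-\ell)$-linear $A_{\vec q}$ setting, producing the factor $[\vec w]_{A_{\vec q}}^{(1/q)\max(q,p_1',\ldots,p_m')}$. For $s\in I\setminus J$, the averaged oscillation $\langle|b_{i_s}-\langle b_{i_s}\rangle_Q||f_{i_s}|\rangle_Q$ is estimated by a Bloom-type weighted maximal function bound; the identity $\nu_{i_s}=(\mu_{i_s}/\lambda_{i_s})^{1/p_{i_s}}$ is exactly what converts the oscillation into a constant multiple of $\|b_{i_s}\|_{\mathrm{BMO}(\nu_{i_s})}$ times a ``free'' factor that pairs with the $\mu_{i_s}$-side input and the $\lambda_{i_s}$-side output of the two-weight inequality. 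For $s\in J$, the pointwise factor $|b_{i_s}(x)-\langle b_{i_s}\rangle_Q|$ is paired with $g$ and absorbed through a weighted square-function/Carleson argument whose constant depends on $[\mu_{i_s}]_{A_{p_{i_s}}}$, $[\lambda_{i_s}]_{A_{p_{i_s}}}$, and $\|b_{i_s}\|_{\mathrm{BMO}(\nu_{i_s})}$. The component-wise estimates are combined by multilinear H\"older with the splitting $1/p=\sum_j 1/p_j$, and careful bookkeeping recovers the constant $C(\vec\mu,\vec\lambda,\vec p)$ of the statement.

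The main obstacle, and what makes the estimate genuinely new, is the \emph{mixed} nature of the weight hypothesis: a two-weight Bloom pair $(\mu_{i_s},\lambda_{i_s})$ on the commuted components and a joint multilinear $A_{\vec q}$ hypothesis on the remaining components, all appearing simultaneously in one sparse form. Ensuring that $\nu_{\vec\lambda}$ emerges on the left while the product of $\mu_j$-weighted $L^{p_j}$ norms emerges on the right forces one to choose exponents in each testing step so that the weighted oscillation, the $A_{p_{i_s}}$ factors, and the multilinear $A_{\vec q}$ factor all line up correctly; this is where the sharp exponents $\max(1,1/(p_{i_s}-1))$ and $\max(p_{i_s},q,p_1',\ldots,p_m')$ in $C(\vec\mu,\vec\lambda,\vec p)$ arise. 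The iterated combinatorial structure of the commutator, together with the need to interpolate between the full multilinear and the partial Bloom settings, makes this strictly harder than in the linear or one-weight multilinear cases.
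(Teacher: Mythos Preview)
Your overall two-step strategy---pointwise sparse domination of the commutator, then weighted bounds for the resulting sparse forms---is exactly the paper's, and your $2^\ell$ forms $\A_{\mathcal{S},J}$ are essentially the paper's $\mathcal{A}^{\vec{\gamma}}_{\mathcal{S},\bm{b}}$ (modulo a missing factor $\langle|f_{i_s}|\rangle_Q$ for each $s\in J$; without it $\|f_{i_s}\|_{L^{p_{i_s}}(\mu_{i_s})}$ can never appear on the right). The genuine gap is in how you obtain the sparse domination. You propose to first invoke a sparse bound for $T$ and then ``on each sparse cube, expand each $b_{i_s}$ as $(b_{i_s}-\langle b_{i_s}\rangle_Q)+\langle b_{i_s}\rangle_Q$''; this does not work. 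The sparse collection produced for $T(\vec f)$ depends on the input $\vec f$, so after expanding the iterated commutator into its $2^\ell$ pieces and applying the sparse bound term by term you obtain $2^\ell$ \emph{different} sparse families, none of which permits a choice of constant $\langle b_{i_s}\rangle_Q$ tied to a common cube $Q$. The paper (Proposition~\ref{CommDomCZO}, and Proposition~\ref{CommDomDya} in the dyadic case) instead proves the domination for the commutator \emph{directly}: it runs the recursive stopping-time scheme with the multilinear grand maximal truncation $\mathcal{M}_{T,Q_0}$ and its $L^1\times\cdots\times L^1\to L^{1/m,\infty}$ bound, inserting the subtraction $b_{i_s}-\langle b_{i_s}\rangle_{R_{Q_0}}$ \emph{inside} the stopping-time selection (in the bilinear model, four exceptional sets $E_1,\ldots,E_4$), which is precisely what forces all $2^\ell$ pieces onto a single sparse family.

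The weighted step in the paper is also different from your duality/square-function sketch. It applies Lerner's sparse oscillation lemma (there is an enlarged family $\widetilde{\mathcal{S}}\supset\mathcal{S}$ with $|b-\langle b\rangle_Q|\lesssim\sum_{J\in\widetilde{\mathcal{S}},\,J\subset Q}\langle|b-\langle b\rangle_J|\rangle_J\chi_J$) to convert each $\gamma=2$ factor into $\|b_s\|_{\mathrm{BMO}(\nu_s)}\langle\mathcal{A}^1_{\widetilde{\mathcal{S}}}(f_s)\nu_s\rangle_Q$, bounds each $\gamma=1$ factor pointwise by the linear Bloom sparse operator $\mathcal{T}_{b_s}(f_s)=\sum_{J\in\mathcal{S}}|b_s-\langle b_s\rangle_J|\langle|f_s|\rangle_J\chi_J$, and then splits by H\"older into (i) the known two-weight bound for $\mathcal{T}_{b_s}$ from \cite{Lerner}, (ii) the one-weight bound for $\mathcal{A}^1_{\widetilde{\mathcal{S}}}$ (using $\nu_s^{p_s}\lambda_s=\mu_s$), and (iii) the sharp bound for the classical $(m-\ell_1)$-linear sparse operator on the remaining slots. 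Tracking the constants through this decomposition and maximising over the split is what produces the specific exponents in $C(\vec\mu,\vec\lambda,\vec p)$; your description is too schematic to verify that the same constant would emerge.
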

We defer the definitions of $m$-linear Calder\'on-Zygmund operators, Haar multipliers and paraproducts to Subsection \ref{SubSecDef}. The statement of the theorem above seems quite complicated, while we introduced the extra notation for the subset $I\subset \{1,\ldots,m\}$ simply to include the case of deficient commutators, i.e. when $T$ is not commutated with any symbol in some components (for example $[b_1,T]_1$). Note that in the fully degenerate case $I=\emptyset$, i.e. when there is no commutator structure at all, the theorem degenerates to the $A_2$ theorem for multilinear operators:
\[
\|T(f_1,\ldots,f_m)\|_{L^p(\nu_{\vec{w}})}\lesssim [\vec{w}]_{A_{\vec{p}}}^{\frac{1}{p}\max\left(p,p'_1,\ldots,p'_m\right)}\prod_{j=1}^m\|f_j\|_{L^{p_j}(w_j)},\quad \forall \vec{w}\in A_{\vec{p}}.
\]which for $T$ being a multilinear CZ operator was first proved by Li, Moen and Sun \cite{LMS}.

Two-weight inequalities of this type have been extensively studied in the linear setting such as for Hilbert transform \cite{Bloom, HLW}, general CZ operators \cite{HLW2, Lerner} and multi-parameter CZ operators \cite{HPW}. It is well known that weighted estimates in the multilinear setting has some intrinsic difference compared with the linear case, as there are usually quasi-Banach spaces involved and multilinear weights need not have each component being a classical linear $A_p$ weight. Our result seems to be the first in the literature to extend it to the multilinear setting and it is very interesting to know whether it is possible to weaken the assumptions on the weights in Theorem \ref{basethm}. In general, two weight estimates are known to be quite challenging, especially in the multilinear setting. Our result further magnifies the fact that commutators usually have better properties than the operator that is being commuted.

Theorem \ref{basethm} follows from a domination of the commutator by certain sparse operators (Proposition \ref{CommDomCZO}, \ref{CommDomDya}) which satisfy the desired two-weight estimate above, and the sparse domination can be obtained via a stopping time argument relying on the weak type endpoint estimate of certain maximal truncated operators. In the case of the multilinear CZ operators, such maximal truncated operator was first treated by Grafakos and Torres \cite{GrafakosTorres}, while for the dyadic operators this seems to be new. Similar technique has been used in the linear setting \cite{Lerner} to obtain an analog of the two-weight estimate for the commutator. Sparse domination of the commutator is expected to be of independent interest, as it provides a fine quantification of its boundedness which should imply not only the weighted estimates that we explore in this article, but also weak type endpoint bounds, which we plan to study in a forthcoming article.

Next, we extend the two-weight estimate to higher order commutators through an abstract two-weight bootstrapping technique, which applies to arbitrary multilinear operators, not necessarily of Calder\'on-Zygmund type. 
\begin{theorem}\label{multi2}
Let $\vec{\nu}=(\nu_1,\ldots,\nu_m)$ be a fixed multiple weight on $\mathbb{R}^n$, $\vec{p}=(p_1,\ldots,p_m)$, $1/p=1/p_1+\cdots+1/p_m$, $1< p_j<\infty$, $1\leq p<\infty$ and $\widetilde{T}$ be a $m$-linear operator satisfying
\[
\|\widetilde{T}\|_{L^{p_1}(\mu_1)\times\cdots\times L^{p_m}(\mu_m)\to L^p(\nu_{\vec\lambda})}\leq C_{n,m,\vec{p},\widetilde{T}}\left([\vec{\mu}]_{A_{\vec{p}}},[\vec{\lambda}]_{A_{\vec{p}}}\right),
\]where $C_{n,m,\vec{p},\widetilde{T}}(\cdot,\cdot)$ is an increasing function of both components, with $\vec{\mu}\in A_{\vec{p}}$, $\vec\lambda\in A_{\vec{p}}$ and $\mu_j/\lambda_j=\nu_j^{p_j}$, $\forall j$. For $\vec{k}=(k_1,\ldots,k_m)$ with $k_j\geq 0$, $\forall 1\leq j\leq m$, let $b_j^i\in \text{BMO}(\mathbb{R}^n)$, $\forall 1\leq i\leq k_j$, then there holds
\[
\left\|C^{\vec{k},m}_{\{\vec{b}\}}(\widetilde{T})\right\|_{L^{p_1}(\mu_1)\times\cdots\times L^{p_m}(\mu_m)\to L^p(\nu_{\vec\lambda})}\leq C_{n,m,\vec{p},\vec{k},\widetilde{T}}\left([\vec{\mu}]_{A_{\vec{p}}},[\vec{\lambda}]_{A_{\vec{p}}}\right)\prod_{j=1}^m\prod_{i=1}^{k_j}\|b_j^i\|_{\text{BMO}(\mathbb{R}^n)},
\]where if $k_j=0$ for some $j$, then the corresponding $\text{BMO}(\mathbb{R}^n)$ norms don't appear on the right hand side.
\end{theorem}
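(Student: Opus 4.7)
The plan is to prove Theorem \ref{multi2} by the Coifman-Rochberg-Weiss conjugation trick, extended to the multilinear iterated setting and combined with Cauchy's integral formula and a quantitative John-Nirenberg stability of the $A_{\vec p}$ class. Since $C^{\vec k,m}_{\{\vec b\}}(\widetilde T)$ is multilinear in the symbols $b_j^i$, one may normalize $\|b_j^i\|_{\text{BMO}(\mathbb{R}^n)}=1$ for every $(j,i)$ and restore the product of BMO norms at the end.

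For complex parameters $\{z_j^i\}$, put $\phi_j(x):=\sum_{i=1}^{k_j}z_j^i b_j^i(x)$ and define
\[
T_{\vec z}(\vec f):=e^{\phi_1+\cdots+\phi_m}\,\widetilde T(e^{-\phi_1}f_1,\ldots,e^{-\phi_m}f_m).
\]
A short induction on $K:=\sum_jk_j$ yields the identity
\[
C^{\vec k,m}_{\{\vec b\}}(\widetilde T)(\vec f)=\prod_{j=1}^m\prod_{i=1}^{k_j}\partial_{z_j^i}\Big|_{\vec z=0}T_{\vec z}(\vec f).
\]
The key idea is to perturb both weights by the \emph{same} exponential factor. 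Set $\tilde\mu_j:=\mu_j e^{p_j\re\phi_j}$ and $\tilde\lambda_j:=\lambda_j e^{p_j\re\phi_j}$, so that $\tilde\mu_j/\tilde\lambda_j=\mu_j/\lambda_j=\nu_j^{p_j}$ is preserved and the fixed $\vec\nu$ continues to govern the new pair. A direct change of variables gives
\[
\|T_{\vec z}(\vec f)\|_{L^p(\nu_{\vec\lambda})}=\|\widetilde T(e^{-\phi_1}f_1,\ldots,e^{-\phi_m}f_m)\|_{L^p(\nu_{\vec{\tilde\lambda}})},\qquad \|e^{-\phi_j}f_j\|_{L^{p_j}(\tilde\mu_j)}=\|f_j\|_{L^{p_j}(\mu_j)},
\]
so whenever $\vec{\tilde\mu},\vec{\tilde\lambda}\in A_{\vec p}$, the hypothesis on $\widetilde T$ yields
\[
\|T_{\vec z}(\vec f)\|_{L^p(\nu_{\vec\lambda})}\le C_{n,m,\vec p,\widetilde T}\bigl([\vec{\tilde\mu}]_{A_{\vec p}},[\vec{\tilde\lambda}]_{A_{\vec p}}\bigr)\prod_{j=1}^m\|f_j\|_{L^{p_j}(\mu_j)}.
\]

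Next, I would verify that there exists $\epsilon_0>0$, depending only on $[\vec\mu]_{A_{\vec p}},[\vec\lambda]_{A_{\vec p}},\vec p,n,m,\vec k$, such that for $|z_j^i|\le\epsilon_0$ one has $[\vec{\tilde\mu}]_{A_{\vec p}}\le 2[\vec\mu]_{A_{\vec p}}$ and similarly for $\vec{\tilde\lambda}$. Using the equivalence $\vec w\in A_{\vec p}\iff\nu_{\vec w}\in A_{mp}$ and $w_j^{1-p_j'}\in A_{mp_j'}$ recalled in the introduction, this reduces to the standard quantitative John-Nirenberg fact that $w\in A_q$ and $\|g\|_{\text{BMO}}$ small (depending on $[w]_{A_q}$) imply $we^g\in A_q$ with $[we^g]_{A_q}\le 2[w]_{A_q}$, applied to each $w_j^{1-p_j'}e^{-p_j(p_j'-1)\re\phi_j}$ and to the joint weight $\nu_{\vec w}\,e^{p\re(\phi_1+\cdots+\phi_m)}$, for both $\vec w=\vec\mu$ and $\vec w=\vec\lambda$. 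Then Cauchy's integral formula on the polydisc $\{|z_j^i|=\epsilon_0\}$ gives
\[
C^{\vec k,m}_{\{\vec b\}}(\widetilde T)(\vec f)=\frac{1}{(2\pi i)^K}\oint\cdots\oint\frac{T_{\vec z}(\vec f)}{\prod_{j,i}(z_j^i)^2}\,d\vec z,
\]
and Minkowski's inequality combined with the bound above yields
\[
\|C^{\vec k,m}_{\{\vec b\}}(\widetilde T)(\vec f)\|_{L^p(\nu_{\vec\lambda})}\le \epsilon_0^{-K}\,C_{n,m,\vec p,\widetilde T}\bigl(2[\vec\mu]_{A_{\vec p}},2[\vec\lambda]_{A_{\vec p}}\bigr)\prod_{j=1}^m\|f_j\|_{L^{p_j}(\mu_j)}.
\]
Undoing the normalization by homogeneity produces $\prod_{j,i}\|b_j^i\|_{\text{BMO}(\mathbb{R}^n)}$, and $\epsilon_0^{-K}$ together with the factor $C_{n,m,\vec p,\widetilde T}(2\,\cdot\,,2\,\cdot\,)$ is absorbed into a new increasing function $C_{n,m,\vec p,\vec k,\widetilde T}(\cdot,\cdot)$.

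The main obstacle is the stability step: one needs \emph{simultaneous} John-Nirenberg control of the joint weight $\nu_{\vec{\tilde w}}$ and of every marginal $\tilde w_j^{1-p_j'}$, uniformly over the polydisc and for both $\vec w=\vec\mu$ and $\vec w=\vec\lambda$. This forces the radius $\epsilon_0$ to depend on both $[\vec\mu]_{A_{\vec p}}$ and $[\vec\lambda]_{A_{\vec p}}$, which is exactly what is needed for $C_{n,m,\vec p,\vec k,\widetilde T}$ to inherit the increasing-in-both-arguments property of $C_{n,m,\vec p,\widetilde T}$. Everything else is essentially bookkeeping: checking the derivative identity for $T_{\vec z}$, justifying the analyticity needed for Cauchy, and the passage from pointwise to norm estimates via Minkowski.
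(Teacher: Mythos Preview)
Your proposal is essentially the paper's own proof: the conjugation $T_{\vec z}$, the derivative identity, Cauchy on the polydisc, Minkowski (this is where $p\ge 1$ is used, which you should note), and the observation that the ratio $\tilde\mu_j/\tilde\lambda_j=\nu_j^{p_j}$ is preserved so the hypothesis on $\widetilde T$ applies to the perturbed pair. The paper carries this out verbatim in Section~\ref{Smulti1}.

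The one place where your outline diverges is the stability step, and there is a small gap there. You propose to prove $[\vec{\tilde\mu}]_{A_{\vec p}}\le 2[\vec\mu]_{A_{\vec p}}$ by invoking the equivalence $\vec w\in A_{\vec p}\iff \nu_{\vec w}\in A_{mp}$ and $w_j^{1-p_j'}\in A_{mp_j'}$ and then applying the linear fact $[we^g]_{A_q}\le 2[w]_{A_q}$ to each of the $m+1$ pieces. The forward direction of that equivalence is quantitative (each marginal $A_q$ constant is controlled by $[\vec w]_{A_{\vec p}}$), but the \emph{reverse} direction, which is what you need to reassemble $[\vec{\tilde\mu}]_{A_{\vec p}}$ from the perturbed marginals, is only stated qualitatively in the paper and does not admit a clean quantitative form. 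So controlling each piece by a factor of $2$ does not automatically give $[\vec{\tilde\mu}]_{A_{\vec p}}\le C[\vec\mu]_{A_{\vec p}}$.

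The paper sidesteps this by proving the stability (Lemma~\ref{multilem}) directly on the $A_{\vec p}$ expression: for each cube $Q$, apply H\"older with a free exponent $q>1$ to split off the exponential from $\nu_{\vec w}$ and from each $w_j^{1-p_j'}$; choose $q$ close enough to $1$ (using reverse H\"older for the constituent $A_{mp}$, $A_{mp_j'}$ weights) so that the weight factors return to the original $A_{\vec p}$ quantity; and use John--Nirenberg to bound the exponential factors by an absolute constant once the radii are small enough. This yields $[\vec{\tilde\mu}]_{A_{\vec p}}\le C_{n,m,\vec p}[\vec\mu]_{A_{\vec p}}$ directly, with the radius depending on $[\vec\mu]_{A_{\vec p}}$ through the reverse H\"older exponent. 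Your plan becomes correct if you replace the ``reduce via the equivalence'' step by this direct argument.
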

%Make sure this works for multilinear weight. But for our application with product multiple weight, it already works for sure.
Combined with Theorem \ref{basethm}, this immediately implies the following two-weight upper bound estimate for higher order commutators.
\begin{corollary}\label{higherordercor}
Let $\vec{p}=(p_1,\ldots,p_m)$, $1<p_j<\infty$, $1\leq p<\infty$ and $1/p=1/p_1+\cdots+1/p_m$. Let $I=\{i_1,\ldots,i_\ell\}\subset\{1,\ldots,m\}$, $b_{i_s}^1\in\text{BMO}(\nu_{i_s})$, and $b_{i_s}^i\in\text{BMO}(\mathbb{R}^n)$, $\forall 2\leq i\leq k_{i_s}$, $s=1,\ldots,\ell$. For any multilinear $A_{\vec{p}}$ weights $\vec{\mu}=(\mu_1,\ldots,\mu_m)$ and $\vec{\lambda}=(\lambda_1,\ldots,\lambda_m)$ such that $\mu_{i_s}, \lambda_{i_s}\in A_{p_{i_s}}$, $\forall s=1,\ldots,\ell$, while $\mu_j=\lambda_j$, $\forall j\notin I$. Denote $\vec{q}=(p_j)_{j\notin I}$, $\vec{w}=(\mu_j)_{j\notin I}$ and assume that the $(m-\ell)$-linear weight $\vec{w}\in A_{\vec{q}}$. Then, let $\vec{\nu}=(\nu_{i_1},\ldots,\nu_{i_\ell})$ satisfy $\nu_{i_s}=(\mu_{i_s}/\lambda_{i_s})^{1/p_{i_s}}$, and $\vec{k}=(k_1,\ldots,k_m)$ with $k_{i_s}\geq 1$ for $s=1,\ldots,\ell$ and $k_j= 0$ for $j\notin I$. Then there holds
\[
\begin{split}
&\left\|C^{\vec{k},m}_{\{\vec{b}\}}(T)\right\|_{L^{p_1}(\mu_1)\times\cdots\times L^{p_m}(\mu_m)\to L^p(\nu_{\vec\lambda})}\\
\leq& C_{n,m,\vec{p},\vec{k},T}\left([\vec{\mu}]_{A_{\vec{p}}},[\vec{\lambda}]_{A_{\vec{p}}}\right)\left(\prod_{s=1}^\ell \|b_{i_s}^1\|_{\text{BMO}(\nu_{i_s})}\right)\left(\prod_{s=1}^\ell\prod_{i=2}^{k_{i_s}}\|b_{i_s}^i\|_{\text{BMO}(\mathbb{R}^n)}\right),
\end{split}
\]for $T$ being a $m$-linear Calder\'on-Zygmund operator, Haar multiplier or paraproduct, where if $k_{i_s}=1$ for some $s$, then the corresponding $\text{BMO}(\mathbb{R}^n)$ norms don't appear on the right hand side.
\end{corollary}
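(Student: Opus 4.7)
The plan is to chain Theorem~\ref{basethm} and Theorem~\ref{multi2}: the former handles the single weighted-BMO symbol present in each commutator slot, and the latter iteratively appends the remaining classical-BMO symbols.

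First, I would exploit the invariance of $C^{\vec{k},m}_{\{\vec b\}}(T)$ under permutations of $\{b_j^i\}_{i=1}^{k_j}$ for each fixed $j$ to move the weighted-BMO symbols $b_{i_s}^1$ ($s=1,\ldots,\ell$) into the innermost commutator positions. Setting
\[
\widetilde T := [b_{i_1}^1,\cdots,[b_{i_\ell}^1,T]_{i_\ell}\cdots]_{i_1},
\]
one has the identification $C^{\vec{k},m}_{\{\vec b\}}(T) = C^{\vec{k}\,',m}_{\{\vec b\,'\}}(\widetilde T)$, where $\vec{k}\,'$ is obtained from $\vec{k}$ by decreasing each $k_{i_s}$ by one (and leaving $k_j'=0$ for $j\notin I$), while the remaining symbols $\{b_{i_s}^i\}_{s,\,i\geq 2}$ all lie in classical $\text{BMO}(\mathbb{R}^n)$.

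Next, the weight hypotheses of the corollary coincide with those of Theorem~\ref{basethm}, so applying the latter to $\widetilde T$ yields the two-weight bound
\[
\|\widetilde T\|_{L^{p_1}(\mu_1)\times\cdots\times L^{p_m}(\mu_m)\to L^p(\nu_{\vec\lambda})} \lesssim C(\vec\mu,\vec\lambda,\vec p)\prod_{s=1}^\ell\|b_{i_s}^1\|_{\text{BMO}(\nu_{i_s})}.
\]
Theorem~\ref{multi2} is then applied to the $m$-linear operator $\widetilde T$ together with the $\sum_{s=1}^\ell(k_{i_s}-1)$ classical-BMO symbols $\{b_{i_s}^i\}_{s,\,i\geq 2}$, which delivers
\[
\|C^{\vec{k}\,',m}_{\{\vec b\,'\}}(\widetilde T)\|_{L^{p_1}(\mu_1)\times\cdots\to L^p(\nu_{\vec\lambda})} \lesssim C_{n,m,\vec p,\vec k,T}([\vec\mu]_{A_{\vec p}},[\vec\lambda]_{A_{\vec p}})\prod_{s=1}^\ell\|b_{i_s}^1\|_{\text{BMO}(\nu_{i_s})}\prod_{s=1}^\ell\prod_{i=2}^{k_{i_s}}\|b_{i_s}^i\|_{\text{BMO}(\mathbb{R}^n)},
\]
the weighted-BMO factors propagating through from the preceding bound on $\widetilde T$. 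Combined with the identification from the first step, this is exactly the inequality claimed in the corollary.

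The main obstacle I expect is the compatibility of the two theorems: Theorem~\ref{multi2} expects the norm of the base operator to be dominated by an increasing function of only $[\vec\mu]_{A_{\vec p}}$ and $[\vec\lambda]_{A_{\vec p}}$, whereas Theorem~\ref{basethm} provides a constant $C(\vec\mu,\vec\lambda,\vec p)$ involving the individual $A_{p_{i_s}}$ characteristics of $\mu_{i_s},\lambda_{i_s}$ and the $A_{\vec q}$ characteristic of $\vec w=(\mu_j)_{j\notin I}$. Under the structural assumptions of the corollary---namely $\mu_{i_s},\lambda_{i_s}\in A_{p_{i_s}}$, $\mu_j=\lambda_j$ for $j\notin I$, and $\vec w\in A_{\vec q}$---these auxiliary characteristics can be majorized by a suitable increasing function of $[\vec\mu]_{A_{\vec p}}$ and $[\vec\lambda]_{A_{\vec p}}$, which is the routine but slightly delicate check that legitimizes the composition.
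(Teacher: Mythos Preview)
Your overall strategy---define $\widetilde T$ as the first-order commutator, bound it via Theorem~\ref{basethm}, then feed it into Theorem~\ref{multi2} to append the classical-BMO symbols---is exactly what the paper intends; the paper simply says the corollary follows by combining the two theorems and gives no further detail.

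You are right that there is a compatibility issue, but your proposed resolution is wrong. The individual characteristics $[\mu_{i_s}]_{A_{p_{i_s}}}$, $[\lambda_{i_s}]_{A_{p_{i_s}}}$ and $[\vec w]_{A_{\vec q}}$ \emph{cannot} be majorized by $[\vec\mu]_{A_{\vec p}}$ and $[\vec\lambda]_{A_{\vec p}}$: the multilinear $A_{\vec p}$ condition is strictly weaker than the product of linear $A_{p_j}$ conditions, so no such quantitative control exists even under the qualitative structural hypotheses of the corollary. The point is rather that one should re-run the Cauchy-integral argument of Theorem~\ref{multi2} (instead of applying it as a black box) and observe that the perturbed weights it produces still satisfy the extra structural hypotheses of Theorem~\ref{basethm}. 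Indeed, for $j=i_s\in I$ the perturbation is $\mu_{i_s}\mapsto e^{p_{i_s}\operatorname{Re}(\sum_{i\ge 2} b_{i_s}^i z_{i_s}^i)}\mu_{i_s}$ with $b_{i_s}^i\in\text{BMO}$, and the classical linear fact (the $m=1$ case of Lemma~\ref{multilem}) gives that this perturbed weight remains in $A_{p_{i_s}}$ with characteristic $\lesssim[\mu_{i_s}]_{A_{p_{i_s}}}$ for $|z_{i_s}^i|$ small enough; for $j\notin I$ one has $k_j=0$, so $\mu_j$ is unperturbed and the $(m-\ell)$-linear condition $\vec w\in A_{\vec q}$ is preserved verbatim. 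Thus Theorem~\ref{basethm} applies to the perturbed weights with constant comparable to $C(\vec\mu,\vec\lambda,\vec p)$, and the Cauchy integral closes. The resulting bound naturally depends on all of the characteristics appearing in $C(\vec\mu,\vec\lambda,\vec p)$; the notation $C_{n,m,\vec p,\vec k,T}([\vec\mu]_{A_{\vec p}},[\vec\lambda]_{A_{\vec p}})$ in the corollary should be read in this somewhat loose sense.
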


Note that the choice $\left\{b_{i_s}^1\right\}$ above doesn't play any particular role as the commutator is invariant under permutations of $\left\{b_j^i\right\}$ for each fixed $j$. Theorem \ref{multi2} is proven using a method involving the Cauchy integral formula, which goes back to the work of Coifman, Rochberg and Weiss \cite{CRW}. In the linear, one-weight case when all functions $b_j$ are the same, this result was proved by Chung, Pereyra and P\'erez in \cite{CPP}. In the two-weight case, the only previously known result along this line of research is by Hyt\"onen \cite{Hy}, where he applies the Cauchy integral method to obtain a linear version of Theorem \ref{multi2} when $b_j=b\in \text{BMO}\cap\text{BMO}(\nu)$, $\forall j$. Hyt\"onen's result was first proved by Holmes and Wick \cite{HW}, where a different method involving decomposition into dyadic shift operators is applied. In the multilinear case, much less is known. The Cauchy integral formula (in a simple form) was first used by P\'erez and Torres \cite{PerezTorres} to study certain first order multilinear commutators, and it was further extended to a very general setting by B\'enyi et al. \cite{BMMST} where different types of multilinear operators (not necessarily CZ), weights and BMO spaces are considered. Our result continues the line of research and seems to be the first attempt to extend the Cauchy integral method to the multilinear two-weight setting.

Furthermore, we obtain a lower bound estimate for the commutator, which provides a characterization of BMO via multilinear commutators. %{\color{red}{Can we prove (3) implying (1) assuming only $\vec{w}\in A_{\vec{p}}$ in (3)?}}
\begin{theorem}\label{lower bound}
The following statements are equivalent.
\begin{enumerate}%[label = $(\alph*)$]
\item $b \in \text{BMO}(\mathbb{R}^n)$.
\item For all $1/p=1/p_1+\cdots+1/p_m$ with $1 < p_j< \infty$, $\beta \in \{1, \ldots, m \}$, and weight $\vec{w} = (w_1, \ldots, w_m)$ with $w_\beta\in A_{p_\beta}$ and $(m-1)$-linear weight $(w_1,\ldots,\widehat{w_\beta},\ldots,w_m)\in A_{\vec{q}}$, $\vec{q}=(p_1,\ldots,\widehat{p_\beta},\ldots,p_m)$, the following map is bounded:
\[[b, T]_{\beta}: L^{p_1}(w_1) \times \cdots \times L^{p_m}(w_m) \rightarrow L^p(\nu_{\vec{w}}),\]where $T$ is any $m$-linear Calder\'on-Zygmund operator, Haar multiplier or paraproduct.
\item For all dyadic grid $\mathcal{D}$, there exist some choices of $\vec{p}$, $\beta$ as above, some $\vec{w}$ with $w_j\in A_{p_j}$, $\forall j$, and some $\bm{\alpha}$, so that the following map is bounded:
\[[b, P_{\bm{\epsilon}}^{\bm{\alpha}}]_{\beta}: L^{p_1}(w_1) \times \cdots \times L^{p_m}(w_m) \rightarrow L^p(\nu_{\vec{w}}),
\]where $P_{\bm{\epsilon}}^{\bm{\alpha}}$ is any Haar multiplier defined in (\ref{DefHaarM}) with respect to $\mathcal{D}$ satisfying that $\{|\epsilon_I|\}_I$ are bounded from below uniformly and $\alpha_j\neq 1$ for some $j\in\{1,\ldots,\widehat{\beta},\ldots,m\}$.

\end{enumerate}
\end{theorem}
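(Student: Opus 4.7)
The plan is to prove the three implications $(1) \Rightarrow (2) \Rightarrow (3) \Rightarrow (1)$.

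For $(1) \Rightarrow (2)$, I would specialize Theorem~\ref{basethm} to the index set $I = \{\beta\}$ and take $\mu_j = \lambda_j = w_j$ for every $j$. Then $\nu_\beta = (\mu_\beta/\lambda_\beta)^{1/p_\beta} \equiv 1$, so $\text{BMO}(\nu_\beta)$ reduces to the classical $\text{BMO}(\mathbb{R}^n)$, and the hypotheses $w_\beta \in A_{p_\beta}$ together with $(w_1,\ldots,\widehat{w_\beta},\ldots,w_m) \in A_{\vec{q}}$ match exactly the hypotheses of Theorem~\ref{basethm}. Its conclusion is precisely the stated boundedness. The step $(2) \Rightarrow (3)$ is immediate by specialization: pick any specific admissible $\vec{p}$, $\beta$, and product weight $\vec{w}$ with each $w_j \in A_{p_j}$ (which automatically places $\vec{w}$ in $A_{\vec{p}}$), and observe that each $P_{\bm{\epsilon}}^{\bm{\alpha}}$ is a particular Haar multiplier in the class covered by $(2)$.

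The substance of the theorem is $(3) \Rightarrow (1)$. Fix a dyadic grid $\mathcal{D}$ and a cube $Q \in \mathcal{D}$; my goal is to bound $|Q|^{-1}\int_Q |b - \langle b \rangle_Q|\,dx$ uniformly in $Q$. I would insert explicit Haar-type test tuples $(f_1,\ldots,f_m)$ adapted to $Q$ into the bounded commutator $[b, P_{\bm{\epsilon}}^{\bm{\alpha}}]_\beta$: for each $j \neq \beta$ take $f_j = h_Q^{\alpha_j}$, and choose $f_\beta$ (for instance a normalised indicator or Haar function on $Q$) so that in the Haar expansion of $P_{\bm{\epsilon}}^{\bm{\alpha}}(f_1,\ldots,bf_\beta,\ldots,f_m)$ restricted to $Q$, only the scale $I = Q$ contributes. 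The hypothesis $\alpha_j \neq 1$ for some $j \neq \beta$ guarantees the presence of a non-cancellative factor $h_Q^0 \sim |Q|^{-1/2}\chi_Q$ in the product, which is what permits the main term of $[b,P_{\bm{\epsilon}}^{\bm{\alpha}}]_\beta(\vec{f})(x)$ on $Q$ to be, up to absolute constants and the factor $\epsilon_Q$, proportional to $(b(x) - \langle b\rangle_Q)\chi_Q(x)$. The assumed weighted boundedness then translates into
\[
\|(b - \langle b\rangle_Q)\chi_Q\|_{L^p(\nu_{\vec w})} \lesssim |\epsilon_Q|^{-1} \prod_{j=1}^m \|f_j\|_{L^{p_j}(w_j)},
\]
and the uniform lower bound $|\epsilon_Q| \gtrsim 1$ together with H\"older's inequality and the $A_{p_j}$ properties of the $w_j$ convert this into $|Q|^{-1}\int_Q |b - \langle b\rangle_Q|\,dx \lesssim 1$, i.e.\ $b \in \text{BMO}_{\mathcal{D}}$. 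Since $(3)$ furnishes such an estimate for every dyadic grid, the standard fact that $\text{BMO}(\mathbb{R}^n)$ is contained in (equivalently, is the intersection over) a sufficient family of $\text{BMO}_{\mathcal{D}^\omega}$ for shifted dyadic grids yields $b \in \text{BMO}(\mathbb{R}^n)$.

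The main obstacle is the combinatorial isolation of the $I = Q$ term in the expansion of the commutator against the test tuple. I expect to need to arrange the test functions so that descendant scales $I \subsetneq Q$ vanish by orthogonality of the cancellative Haar functions among the $f_j$'s, while ancestral scales $I \supsetneq Q$ produce only constants that can be absorbed into the reference value $\langle b\rangle_Q$ (via the telescoping identity $b - \langle b\rangle_Q = \sum_{I \subsetneq Q}\langle b, h_I\rangle h_I$ on $Q$). The two structural assumptions in $(3)$ play essential but distinct roles: $|\epsilon_I| \gtrsim 1$ prevents the principal term from being quantitatively small, and $\alpha_j \neq 1$ for some $j \neq \beta$ prevents it from vanishing by a hidden cancellation.
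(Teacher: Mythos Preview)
The overall strategy---test Haar functions adapted to $Q$, isolate the $I=Q$ term, extract $(b-\langle b\rangle_Q)\chi_Q$, then pass to BMO via H\"older and weight properties---matches the paper's proof. Two points, however, need correction.

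First, your account of the role of the hypothesis $\alpha_j\neq\vec 1$ for some $j\neq\beta$ is inverted. In the paper's convention $h_I^{\vec 1}=|I|^{-1/2}\chi_I$ is the \emph{non-cancellative} function, so $\alpha_j\neq\vec 1$ means $h_I^{\alpha_j}$ is \emph{cancellative}. The reason this hypothesis matters is that in the term $P_{\bm\epsilon}^{\bm\alpha}(f_1,\dots,bf_\beta,\dots,f_m)$ the $\beta$-th slot carries $bf_\beta$, which is not a single Haar function, so $\langle bf_\beta,h_I^{\alpha_\beta}\rangle$ does not vanish for $I\neq Q$. One needs a \emph{cancellative} slot $j\neq\beta$ with $f_j=|Q|^{1/2}h_Q^{\alpha_j}$ so that orthogonality kills every $I\neq Q$ in \emph{both} terms of the commutator simultaneously. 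You reach essentially this mechanism in your closing paragraph, but the stated explanation (``guarantees the presence of a non-cancellative factor'') is backwards.

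Second, and more substantively, the passage from $\|(b-\langle b\rangle_Q)\chi_Q\|_{L^p(\nu_{\vec w})}\lesssim\prod_j\|f_j\|_{L^{p_j}(w_j)}$ to $\fint_Q|b-\langle b\rangle_Q|\,dx\lesssim 1$ via ``H\"older's inequality and the $A_{p_j}$ properties'' does not go through as written. Statement (3) only hands you \emph{some} exponent vector $\vec p$, and typically $p=(\sum 1/p_j)^{-1}<1$; even when $p\geq 1$ you only know $\nu_{\vec w}\in A_{mp}$, not $A_p$, so the dual factor $\nu_{\vec w}^{1-p'}$ need not be locally integrable. The paper resolves both obstructions at once by bounding $\int_Q|b-\langle b\rangle_Q|^{1/m}\,dx$ instead: H\"older with exponents $mp$ and $(mp)'$ is always legitimate, and the resulting dual factor $\bigl(\int_Q\nu_{\vec w}^{1/(1-mp)}\bigr)^{(mp-1)/(mp)}$ is controlled via the multilinear $A_{\vec p'}$ condition on the dual vector $(w_1^{1-p_1'},\dots,w_m^{1-p_m'})$, which follows from $w_j\in A_{p_j}$ for all $j$. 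This yields a uniform bound on $\bigl(\fint_Q|b-\langle b\rangle_Q|^{1/m}\bigr)^m$, which is an equivalent norm on $\text{BMO}_{\mathcal D}$.
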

This result shows that multilinear dyadic commutator is a representative class for commutators of other multilinear operators: given a function $b$, if for each dyadic grid the commutator $[b,P_{\bm{\epsilon}}^{\bm{\alpha}}]_{\beta}$ is bounded on weighted $L^{\vec{p}}$ for \emph{some} choices of Haar multiplier $P_{\bm{\epsilon}}^{\bm{\alpha}}$, parameter $\beta$, and multilinear weight, then commutator $[b,T]_{\beta'}$ is bounded on \emph{any} weighted $L^{\vec{q}}$ for \emph{any} choices of continuous or dyadic multilinear CZ operator $T$, parameter $\beta'$, and multilinear weight.

In the linear setting, dyadic operators have been extensively studied as model cases and tools to handle various problems in the continuous setting, which is why we expect our results above concerning dyadic operators (Haar multipliers and paraproducts) to have some further applications in the advancement of the multilinear theory. For example, in a very recent work of the second author with Li, Martikainen and Vuorinen \cite{LMOV}, a bilinear representation theorem is proved which enables one to represent bilinear CZ operators as averages of bilinear dyadic shifts (higher complexity version of Haar multipliers) and paraproducts. It would be interesting to know whether similar versions of Theorem \ref{basethm} and \ref{lower bound} hold for dyadic shifts as well. It is also worth noticing that multilinear Haar multipliers and paraproducts arise naturally in the decomposition of $m$-fold pointwise product of functions $f_1,\ldots,f_m$, which we refer to \cite{Kunwar} for more details.

\begin{remark}
In Theorem \ref{basethm}, \ref{multi2}, \ref{lower bound} above, if one is only interested in studying the dyadic operators with respect to dyadic grid $\mathcal{D}$, one can replace the BMO spaces that appear by their dyadic versions and the same results remain valid.
\end{remark}

The article is organized as follows. In Section \ref{SecSparse}, we prove a sparse bound for first order multilinear CZ commutators, which, combined with Subsection \ref{SubSecFirst}, will complete the proof of Theorem \ref{basethm} for CZ operators. In the rest of Section \ref{SecBloom}, Theorem \ref{multi2} and \ref{lower bound} will be demonstrated. We then discuss the case with dyadic operators (Haar multipliers and paraproducts) of Theorem \ref{basethm} in Appendix \ref{SecApp}, as it proceeds almost parallel to the continuous one.

Before finishing the Introduction, we state the definitions of the multilinear operators that appear in the main theorems.
\subsection{Definitions of multilinear CZ and dyadic operators}\label{SubSecDef}
Let $T$ be a $m$-linear operator mapping $(\mathcal{S}(\mathbb{R}^n))^m$ into $\mathcal{S}'(\mathbb{R}^n)$. $T$ is called a \emph{$m$-linear Calder\'on-Zygmund operator} if for appropriate functions $\{f_1,\ldots,f_m\}$,
\[
T(f_1,\ldots,f_m)(x)=\int_{(\mathbb{R}^n)^m} K(x,y_1,\ldots,y_m)f_1(y_1)\cdots f_m(y_m)\,dy_1\cdots dy_m,
\]whenever $x\notin\cap_{j=1}^m\text{supp }f_j$, where the kernel $K$ is assumed to be locally integrable away from the diagonal $x=y_1=\cdots =y_m$ in $(\mathbb{R}^n)^{m+1}$, satisfying for some $C,\epsilon>0$ the \emph{size estimate}
\[
|K(y_0,y_1,\ldots,y_m)|\leq\frac{C}{(\sum_{k,\ell=0}^m|y_k-y_{\ell}|)^{nm}}
\]for all $(y_0,y_1,\ldots,y_m)\in (\mathbb{R}^n)^{m+1}$ away from the diagonal, and the smoothness estimate
\[
|K(y_0,\ldots,y_j,\ldots,y_m)-K(y_0,\ldots,y_j',\ldots,y_m)|\leq \frac{C|y_j-y_j'|^\epsilon}{(\sum_{k,\ell=0}^m|y_k-y_{\ell}|)^{nm+\epsilon}}
\]whenever $0\leq j\leq m$ and $|y_j-y_j'|\leq \frac12 \max_{0\leq k\leq m}|y_j-y_k|$. We also make the a priori assumption that $T$ maps $L^{q_1}\times\cdots\times L^{q_m}$ boundedly into $L^{q}$ for some $1<q_j<\infty$ satisfying $\sum_{j=1}^m 1/q_j=1/q$. We refer the reader to \cite{GrafakosTorres, LernerNazarov, CondeRey, Li} and the references therein for more details and properties of multilinear CZ operators. Note that our results remain valid for more general multilinear CZ operators with weaker kernel assumptions, for example kernels satisfying Dini type estimates or $m$-linear $L^r$-H\"ormander condition as studied in \cite{Li}, as what matters here (more precisely, in the proof of Proposition \ref{CommDomCZO} of Section \ref{SecSparse}) is the boundedness of the corresponding maximal truncated operators. 

Next, we define the multilinear dyadic operators that appear in Theorem \ref{basethm}, \ref{lower bound}. Let $\mathcal{D}$ be a fixed dyadic grid on $\mathbb{R}^n$. For all $I\in\mathcal{D}$, $\{h_I^{\alpha_j}\}$ are the cancellative Haar functions for $\{0,1\}^n\ni\alpha_j\neq\vec{1}$, and $h_I^1:=h_I^{\vec{1}}:=|I|^{-1/2}\chi_I$ is the non-cancellative Haar function. First, define the multilinear Haar multiplier associated to the bounded sequence $\bm{\epsilon}=\{\epsilon_I\}_{I\in\mathcal{D}}$ as
\begin{equation}\label{DefHaarM}
P_{\bm{\epsilon}}^{\bm{\alpha}}(\vec{f}):=\sum_{I\in\mathcal{D}}\epsilon_I\La f_1,h_I^{\alpha_1}\Ra \cdots \La f_m,h_I^{\alpha_m}\Ra h_I^{\alpha_{m+1}}|I|^{-(m-1)/2},
\end{equation}where at least two of all $\{\alpha_1,\ldots,\alpha_{m+1}\}$ are not equal to $\vec{1}$, i.e. there are at least two out of the $m+1$ Haar functions that are cancellative. Then, for $g\in\text{BMO}_{\mathcal{D}}$, define the multilinear dyadic paraproduct associated to the bounded sequence $\bm{\epsilon}=\{\epsilon_I\}_{I\in\mathcal{D}}$ as
\begin{equation}\label{DefPara}
\pi_{g,\bm{\epsilon}}^{\bm{\alpha}}(\vec{f}):=\sum_{I\in\mathcal{D}}\epsilon_I\La g, h_I^{\alpha_1} \Ra\left(\prod_{j=1}^m \La f_j, h_I^{\alpha_{j+1}}\Ra\right) h_I^{\alpha_{m+2}}|I|^{-m/2},
\end{equation}where $\alpha_1\neq \vec{1}$ and at least one of the superscripts $\{\alpha_2,\ldots,\alpha_{m+2}\}$ is not equal to $\vec{1}$.

\section{Sparse domination of multilinear commutators}\label{SecSparse}
%State the proposition of CZO and dyadic operators.
In this section, we present the first step (Proposition \ref{CommDomCZO} below) of the proof of Theorem \ref{basethm} for multilinear Calder\'on-Zygmund operators, which reduces it to the estimates of certain sparse operators. A similar version of this reduction for the dyadic operators is given in Appendix \ref{SecApp} (Proposition \ref{CommDomDya}). A collection $\mathcal{S}$ of cubes in $\mathbb{R}^n$ is called \emph{$\eta$-sparse} if there exists $E_I\subset I$ for all $I\in\mathcal{S}$ such that $|E_I|>\eta|I|$ and $\{E_I\}_{I\in\mathcal{S}}$ pairwise disjoint.
\begin{proposition}\label{CommDomCZO}
Let $T$ be a $m$-linear Calder\'on-Zygmund operator and $I=\{i_1,\ldots,i_\ell\}\subset\{1,\ldots,m\}$. Given locally integrable functions $\bm{b}=(b_{i_1},\ldots,b_{i_\ell})$ on $\mathbb{R}^n$, there exists a constant $C=C(n,T)$ so that for any bounded functions $\bm{f}=(f_1,\ldots,f_m)$ with compact support, there exists $3^n$ sparse collections $\mathcal{S}_j=\mathcal{S}_j(T,\bm{f},\bm{b})$ of dyadic cubes, $j=1,\ldots, 3^n$ such that 
\begin{equation}\label{sparseCZO}
\left|[b_{i_1},\cdots,[b_{i_\ell},T]_{i_\ell}\cdots]_{i_1}(\bm{f})\right|\leq C\left(\sum_{j=1}^{3^n}\sum_{\vec{\gamma}\in \{1,2\}^\ell}\mathcal{A}^{\vec{\gamma}}_{\mathcal{S}_j,\bm{b}}(\bm{f})\right),\quad \text{a.e.}
\end{equation}where 
\[
\mathcal{A}^{\vec{\gamma}}_{\mathcal{S}_j,\bm{b}}(\bm{f}):=\sum_{Q\in\mathcal{S}_j}\left(\prod_{s=1}^\ell\Gamma(b_{i_s},f_{i_s},Q,\gamma_{i_s})\right)\left(\prod_{j\notin I}\langle |f_j|\rangle_Q\right)\chi_Q,
\]
\[
\Gamma(b,f,Q,\gamma):=\begin{cases} \left|b-\langle b\rangle_Q\right|\left\langle |f|\right\rangle_Q &\text{if}\,\gamma=1,\\ \left\langle \left|(b-\langle b\rangle_Q)f\right|\right\rangle_Q &\text{if}\, \gamma=2. \end{cases}
\]
\end{proposition}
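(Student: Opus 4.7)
The plan is to extend Lerner's stopping-time argument for sparse domination of linear commutators to the multilinear iterated setting, leveraging the weak-type endpoint of the Grafakos--Torres maximal truncated multilinear operator $T^*$. By the standard three-grid trick (every cube sits inside a comparable cube from one of $3^n$ shifted dyadic grids), it suffices to produce, for each shifted grid separately, a single sparse family with the claimed pointwise bound, and then take the union to obtain the $3^n$ families in the statement. The argument will proceed recursively: given a cube $Q_0$ containing the supports of $\vec{f}$, I will produce pairwise disjoint dyadic subcubes $\{P_k\}\subset\mathcal{D}(Q_0)$ with $\sum_k|P_k|\leq\tfrac12|Q_0|$ such that on $Q_0\setminus\bigcup_k P_k$ the commutator is bounded by $C\sum_{\vec\gamma}\prod_s\Gamma(b_{i_s},f_{i_s},Q_0,\gamma_{i_s})\prod_{j\notin I}\langle|f_j|\rangle_{Q_0}$, and then iterate the construction on each $P_k$.

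The algebraic heart of the argument is the identity
$$[b,T]_\beta(\vec f)(x)=(b(x)-\langle b\rangle_{Q_0})\,T(\vec f)(x)-T(f_1,\ldots,(b-\langle b\rangle_{Q_0})f_\beta,\ldots,f_m)(x),$$
applied iteratively to each $b_{i_s}$ with constant $\langle b_{i_s}\rangle_{Q_0}$. This expands the $\ell$-fold iterated commutator into $2^\ell$ pieces indexed by $\vec\gamma\in\{1,2\}^\ell$: the $\vec\gamma$-piece equals $\pm\prod_{\gamma_{i_s}=1}(b_{i_s}(x)-\langle b_{i_s}\rangle_{Q_0})\cdot T(\vec g^{\vec\gamma})(x)$, where $g_{i_s}^{\vec\gamma}=(b_{i_s}-\langle b_{i_s}\rangle_{Q_0})f_{i_s}$ when $\gamma_{i_s}=2$, $g_{i_s}^{\vec\gamma}=f_{i_s}$ when $\gamma_{i_s}=1$, and $g_j^{\vec\gamma}=f_j$ for $j\notin I$.

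The stopping children $\{P_k\}$ will come from a Calder\'on--Zygmund decomposition of a bad set $E_{Q_0}\subset Q_0$ defined as the union, over $\vec\gamma\in\{1,2\}^\ell$, of the level sets $\{M(\vec g^{\vec\gamma}\chi_{Q_0})>C_1\prod_j\langle|g_j^{\vec\gamma}|\rangle_{Q_0}\}$ and $\{T^*(\vec g^{\vec\gamma}\chi_{Q_0})>C_2\prod_j\langle|g_j^{\vec\gamma}|\rangle_{Q_0}\}$. The weak-type endpoint for the multilinear maximal function $M$ and for the Grafakos--Torres maximal truncation $T^*$ make $|E_{Q_0}|\leq|Q_0|/2^{n+2}$ once $C_1,C_2$ are chosen large enough, so the maximal subcubes $\{P_k\}\subset\mathcal{D}(Q_0)$ with $|P_k\cap E_{Q_0}|>|P_k|/2^{n+1}$ obey the required packing bound. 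For $x\in Q_0\setminus\bigcup_k P_k$, I split $T(\vec g^{\vec\gamma})(x)=T(\vec g^{\vec\gamma}\chi_{Q_0})(x)+\text{(tail)}$: the tail, in which at least one input is supported outside $Q_0$, is pointwise $\lesssim\prod_j\langle|g_j^{\vec\gamma}|\rangle_{Q_0}$ by summing the kernel size estimate over dyadic annuli around $Q_0$, while the localized part is controlled by $T^*(\vec g^{\vec\gamma}\chi_{Q_0})(x)\lesssim\prod_j\langle|g_j^{\vec\gamma}|\rangle_{Q_0}$ by the stopping condition. Multiplying by the outside factors $\prod_{\gamma_{i_s}=1}|b_{i_s}(x)-\langle b_{i_s}\rangle_{Q_0}|$ and rewriting through the $\Gamma$-notation yields exactly the pointwise inequality of the proposition; iteration on the $P_k$ then produces a $\tfrac12$-sparse family.

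The main obstacle is ensuring that the weak endpoint for $T^*$ applies to inputs of the form $(b_{i_s}-\langle b_{i_s}\rangle_{Q_0})f_{i_s}$, which are $L^1$ on $Q_0$ but not globally; this is why the truncation to $Q_0$ must be done \emph{before} invoking the Grafakos--Torres bound. A secondary technical point is the multilinear tail estimate, which requires using the kernel size bound once for each input supported in $Q_0^c$ and summing a multilinear geometric series over dyadic annuli; this must be carried out for each of the $2^\ell$ terms in the expansion, but in each case the resulting bound fits cleanly into the $\Gamma$-framework. No piece of the argument is conceptually new, but the bookkeeping required to combine $2^\ell$ expansion terms, tail versus local decomposition, and the recursive stopping is the principal technical load.
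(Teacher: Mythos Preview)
Your proposal correctly identifies the algebraic expansion of the iterated commutator into $2^\ell$ terms and the need for a stopping-time construction, but there is a genuine gap in how the recursion closes. You explain how to bound the commutator on $Q_0\setminus\bigcup_k P_k$ and then assert one can ``iterate the construction on each $P_k$''. But your construction assumed $Q_0$ contains the supports of $\vec f$; on $P_k$ this fails, and your ``tail'' for $x\in P_k$ becomes the contribution of inputs living in $Q_0\setminus P_k$. Your claim that this tail is $\lesssim\prod_j\langle|g_j^{\vec\gamma}|\rangle_{P_k}$ by summing the kernel \emph{size} estimate over dyadic annuli is false: the annulus $2^{l+1}P_k\setminus 2^l P_k$ contributes $\sim\prod_j\langle|g_j^{\vec\gamma}|\rangle_{2^{l+1}P_k}$ with \emph{no} decay in $l$, so the sum over $l\lesssim\log(|Q_0|/|P_k|)$ scales is unbounded. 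The Grafakos--Torres $T^*$ does not help either, since controlling $T^*(\vec g^{\vec\gamma}\chi_{Q_0})$ at height $\prod_j\langle|g_j^{\vec\gamma}|\rangle_{P_k}$ would require level-set measure $\ll|P_k|$, which the weak endpoint does not give.

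The paper closes this gap by a different organization: one iterates the commutator applied to \emph{localized} inputs $\vec f\chi_{3P_k}$ (so the recursive statement is self-similar), and one must then bound on $P_k$ the \emph{difference} $[\ldots,T](\vec f\chi_{3Q_0})-[\ldots,T](\vec f\chi_{3P_k})$, not a kernel tail. This is done via the Lerner--Li grand maximal truncation
\[
\mathcal{M}_{T,Q_0}(\vec f)(x):=\sup_{x\in Q\subset Q_0}\ \underset{\xi\in Q}{\mathrm{ess\,sup}}\ \big|T(\vec f\chi_{3Q_0})(\xi)-T(\vec f\chi_{3Q})(\xi)\big|,
\]
not $T^*$. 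The $\mathrm{ess\,sup}_{\xi\in Q}$ in the definition is exactly what lets one transfer the bound from a single good point $\xi\in P_k\setminus E$ (which exists because $|P_k\cap E|\leq\tfrac12|P_k|$) to all of $P_k$; this mechanism is absent from your outline and is precisely what makes the recursion close. A smaller issue: $|T(\vec g\chi_{Q_0})|\leq T^*(\vec g\chi_{Q_0})$ is not valid for a general multilinear CZO; the correct replacement is the pointwise inequality $|T(\vec f\chi_{3Q_0})|\leq C\prod_j|f_j|+\mathcal{M}_{T,Q_0}(\vec f)$ from \cite{Li}, which is why the paper also includes the pointwise-product level sets in the exceptional set $E$.
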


\begin{proof}
For the sake of brevity, we prove the proposition only in the bilinear setting and assume that the commutator is full ($I$=\{1,\ldots,m\}). It will be easy to see that the argument extends to the multilinear setting in the obvious way, and the deficient commutator case is even easier to treat. Let $m=2$, then (\ref{sparseCZO}) reduces to the domination  
\[
|[b_2,[b_1,T]_1]_2(f_1,f_2)|\leq C\sum_{j=1}^{3^n}\left(\mathcal{A}^{(1,1)}_{\mathcal{S}_j,\bm{b}}(\bm{f})+\mathcal{A}^{(1,2)}_{\mathcal{S}_j,\bm{b}}(\bm{f})+\mathcal{A}^{(2,1)}_{\mathcal{S}_j,\bm{b}}(\bm{f})+\mathcal{A}^{(2,2)}_{\mathcal{S}_j,\bm{b}}(\bm{f})\right)\quad\text{a.e.}
\]for some choices of sparse collections $\mathcal{S}_j$. We claim that it suffices to show for any fixed cube $Q_0\subset\mathbb{R}^n$ that there exists a $\frac{1}{2}$-sparse collection $\mathcal{S}\subset\mathcal{D}(Q_0)$ such that for a.e. $x\in Q_0$,
\begin{equation}\label{reduceQ0}
\left|[b_2,[b_1,T]_1]_2(f_1\chi_{3Q_0},f_2\chi_{3Q_0})\right|\leq C\sum_{Q\in\mathcal{S}}\left(\sum_{\vec{\gamma}\in\{1,2\}^2}\widetilde{\Gamma}(b_1,f_1,Q,\gamma_1)\widetilde{\Gamma}(b_2,f_2,Q,\gamma_2)\right)\chi_Q,
\end{equation}where
\[
\widetilde{\Gamma}(b_i,f_i,Q,\gamma_i):=\begin{cases} \left|b_i-\langle b_i\rangle_{R_Q}\right|\left\langle |f_i|\right\rangle_{3Q} &\text{if}\,\gamma_i=1,\\ \left\langle \left|(b_i-\langle b_i\rangle_{R_Q})f_i\right|\right\rangle_{3Q} &\text{if}\, \gamma_i=2, \end{cases}
\]and $R_Q$ is a cube from one of the fixed $3^n$ dyadic grids such that $3Q\subset R_Q$ and $|R_Q|\leq 9^n|Q|$. The reduction to estimate (\ref{reduceQ0}) is fairly standard, which we omit and refer to \cite{Lerner} for instance for a justification.

Estimate (\ref{reduceQ0}) will follow from iterating the following claim: there exists a disjoint collection of cubes $P_j\in\mathcal{D}(Q_0)$ such that $\sum_j|P_j|<\frac{1}{2}|Q_0|$ and for a.e. $x\in Q_0$ there holds
\begin{equation}\label{iteration1}
\begin{split}
\left|[b_2,[b_1,T]_1]_2(f_1\chi_{3Q_0},f_2\chi_{3Q_0})(x)\right|\leq &C\left(\sum_{\vec{\gamma}\in\{1,2\}^2}\widetilde{\Gamma}(b_1,f_1,Q_0,\gamma_1)\widetilde{\Gamma}(b_2,f_2,Q_0,\gamma_2)\right)\\
&+\sum_j\left|[b_2,[b_1,T]_1]_2(f_1\chi_{3P_j},f_2\chi_{3P_j})(x)\right|\chi_{P_j}(x).
\end{split}
\end{equation}By the disjointness of $\{P_j\}$ (which will be constructed later), (\ref{iteration1}) can be deduced from the tail estimate
\begin{equation}\label{iteration2}
\begin{split}
&\left|[b_2,[b_1,T]_1]_2(f_1\chi_{3Q_0},f_2\chi_{3Q_0})\right|\chi_{Q_0\setminus\bigcup_j P_j}\\
&\quad+\sum_j\left|[b_2,[b_1,T]_1]_2(f_1\chi_{3Q_0},f_2\chi_{3Q_0})-[b_2,[b_1,T]_1]_2(f_1\chi_{3P_j},f_2\chi_{3P_j})\right|\chi_{P_j}\\
\leq & C\left(\sum_{\vec{\gamma}\in\{1,2\}^2}\widetilde{\Gamma}(b_1,f_1,Q_0,\gamma_1)\widetilde{\Gamma}(b_2,f_2,Q_0,\gamma_2)\right).
\end{split}
\end{equation}

To see (\ref{iteration2}), we consider the following multilinear maximal truncated operator
\[
\mathcal{M}_{T,Q_0}(f_1,f_2)(x):=\sup_{Q:\,x\in Q\subset Q_0}\underset{\xi\in Q}{\text{esssup}}\left|T(f_1\chi_{3Q_0},f_2\chi_{3Q_0})(\xi)-T(f_1\chi_{3Q},f_2\chi_{3Q})(\xi)\right|.
\]It is proven in \cite{Li} that $\mathcal{M}_{T,Q_0}: L^1\times L^1\to L^{1/2,\infty}$ and for a.e. $x\in Q_0$,
\begin{equation}\label{maxtrunc}
\left|T(f_1\chi_{3Q_0},f_2\chi_{3Q_0})(x)\right|\leq C_n\|T\|_{L^1\times L^1\to L^{1/2,\infty}}|f_1(x)f_2(x)|+\mathcal{M}_{T,Q_0}(f_1,f_2)(x).
\end{equation}

Using the fact that $[b_i,T]_i=[b_i-c,T]_i$ for any constant $c$, one can unravel the commutator to bound the LHS of (\ref{iteration2}) by
\[
A_1+A_2+B_1+B_2+C_1+C_2+D_1+D_2
\]where
\[
\begin{split}
&A_1:=\left|b_1-\langle b_1\rangle_{R_{Q_0}}\right|\left|b_2-\langle b_2\rangle_{R_{Q_0}}\right|\left|T\left(f_1\chi_{3Q_0},f_2\chi_{3Q_0}\right)\right|\chi_{Q_0\setminus\bigcup_j P_j},\\
&A_2:=\left|b_1-\langle b_1\rangle_{R_{Q_0}}\right|\left|b_2-\langle b_2\rangle_{R_{Q_0}}\right|\sum_j\left|T\left(f_1\chi_{3Q_0},f_2\chi_{3Q_0}\right)-T\left(f_1\chi_{3P_j},f_2\chi_{3P_j}\right)\right|\chi_{P_j},\\
&B_1:=\left|b_2-\langle b_2\rangle_{R_{Q_0}}\right|\left|T\left((b_1-\langle b_1\rangle_{R_{Q_0}})f_1\chi_{3Q_0},f_2\chi_{3Q_0}\right)\right|\chi_{Q_0\setminus\bigcup_j P_j},\\
&B_2:=\left|b_2-\langle b_2\rangle_{R_{Q_0}}\right|\sum_j\left|T\left((b_1-\langle b_1\rangle_{R_{Q_0}})f_1\chi_{3Q_0},f_2\chi_{3Q_0}\right)-T\left((b_1-\langle b_1\rangle_{R_{Q_0}})f_1\chi_{3P_j},f_2\chi_{3P_j}\right)\right|\chi_{P_j},\\
&C_1:=\left|b_1-\langle b_1\rangle_{R_{Q_0}}\right|\left|T\left(f_1\chi_{3Q_0},(b_2-\langle b_2\rangle_{R_{Q_0}})f_2\chi_{3Q_0}\right)\right|\chi_{Q_0\setminus\bigcup_j P_j},\\
&C_2:=\left|b_1-\langle b_1\rangle_{R_{Q_0}}\right|\sum_j\left|T\left(f_1\chi_{3Q_0},(b_2-\langle b_2\rangle_{R_{Q_0}})f_2\chi_{3Q_0}\right)-T\left(f_1\chi_{3P_j},(b_2-\langle b_2\rangle_{R_{Q_0}})f_2\chi_{3P_j}\right)\right|\chi_{P_j},\\
&D_1:=\left|T\left((b_1-\langle b_1\rangle_{R_{Q_0}})f_1\chi_{3Q_0},(b_2-\langle b_2\rangle_{R_{Q_0}})f_2\chi_{3Q_0}\right)\right|\chi_{Q_0\setminus\bigcup_j P_j},\\
&D_2:=\sum_j\Big|T\left((b_1-\langle b_1\rangle_{R_{Q_0}})f_1\chi_{3Q_0},(b_2-\langle b_2\rangle_{R_{Q_0}})f_2\chi_{3Q_0}\right)\\
&\qquad\qquad\qquad-T\left((b_1-\langle b_1\rangle_{R_{Q_0}})f_1\chi_{3P_j},(b_2-\langle b_2\rangle_{R_{Q_0}})f_2\chi_{3P_j}\right)\Big|\chi_{P_j}.
\end{split}
\]

We now define the exceptional set $E=\bigcup_{j=1}^4 E_j$ where
\[
E_1:=\left\{x\in Q_0:\,\max\left(|f_1f_2|(x),\mathcal{M}_{T,Q_0}(f_1,f_2)(x)\right)>C\langle |f_1|\rangle_{3Q_0}\langle |f_2|\rangle_{3Q_0}\right\},
\]
\[
\begin{split}
E_2:=\Big\{x\in Q_0:\,&\max\Big(\left|\left(b_1-\langle b_1\rangle_{R_{Q_0}}\right)f_1f_2\right|(x),\\
&\mathcal{M}_{T,Q_0}\left(\big(b_1-\langle b_1\rangle_{R_{Q_0}}\big)f_1,f_2\right)(x)\Big)>C\left\langle\left|\big(b_1-\langle b_1\rangle_{R_{Q_0}}\big)f_1\right|\right\rangle_{3Q_0}\langle |f_2|\rangle_{3Q_0}\Big\},
\end{split}
\]
\[
\begin{split}
E_3:=\Big\{x\in Q_0:\,&\max\Big(\left|f_1\left(b_2-\langle b_2\rangle_{R_{Q_0}}\right)f_2\right|(x),\\
&\mathcal{M}_{T,Q_0}\left(f_1,\big(b_2-\langle b_2\rangle_{R_{Q_0}}\big)f_2\right)(x)\Big)>C\langle|f_1|\rangle_{3Q_0}\left\langle\left|\big(b_2-\langle b_2\rangle_{R_{Q_0}}\big)f_2\right|\right\rangle_{3Q_0} \Big\},
\end{split}
\]
\[
\begin{split}
E_4:=\Big\{x\in Q_0:\,&\max\Big(\left|\left(b_1-\langle b_1\rangle_{R_{Q_0}}\right)\left(b_2-\langle b_2\rangle_{R_{Q_0}}\right)f_1f_2\right|(x),\\
&\qquad\qquad\mathcal{M}_{T,Q_0}\left(\big(b_1-\langle b_1\rangle_{R_{Q_0}}\big)f_1,\big(b_2-\langle b_2\rangle_{R_{Q_0}}\big)f_2\right)(x)\Big)\\
&>C\left\langle\left|\big(b_1-\langle b_1\rangle_{R_{Q_0}}\big)f_1\right|\right\rangle_{3Q_0}\left\langle\left|\big(b_2-\langle b_2\rangle_{R_{Q_0}}\big)f_2\right|\right\rangle_{3Q_0} \Big\}.
\end{split}
\]For $C$ chosen sufficiently large, there holds $|E|\leq\frac{1}{2^{n+2}}|Q_0|$. The collection $\{P_j\}$ are constructed by the stopping cubes obtained via the Calder\'on-Zygmund decomposition of function $\chi_E$ at level $\lambda=\frac{1}{2^{n+1}}$. In particular, there holds for each $j$ that $\frac{1}{2^{n+1}}|P_j|\leq |P_j\cap E|\leq \frac{1}{2}|P_j|$ and $|E\setminus \bigcup_j P_j|=0$. Hence, $\sum_j|P_j|<\frac{1}{2}|Q_0|$ and $P_j\cap E^c\neq\emptyset$.

It's thus left to show that the terms $A_1,\ldots, D_2$ are bounded by the RHS of (\ref{iteration2}). Taking $A_1, A_2$ as examples, if $x\in Q_0\setminus\bigcup_j P_j$, then $x\notin E_1$, hence
\[
A_1(x)\leq C\widetilde{\Gamma}(b_1,f_1,Q_0,1)\widetilde{\Gamma}(b_2,f_2,Q_0,1)
\]implied by property (\ref{maxtrunc}). If $x\in P_j$, by definition of $\mathcal{M}_{T,Q_0}$, 
\[
\begin{split}
A_2(x)\leq&\left|b_1-\langle b_1\rangle_{R_{Q_0}}\right|\left|b_2-\langle b_2\rangle_{R_{Q_0}}\right|\underset{\xi\in P_j}{\text{esssup}}\left|T\left(f_1\chi_{3Q_0},f_2\chi_{3Q_0}\right)-T\left(f_1\chi_{3P_j},f_2\chi_{3P_j}\right)\right|(\xi)\\
\leq &C\widetilde{\Gamma}(b_1,f_1,Q_0,1)\widetilde{\Gamma}(b_2,f_2,Q_0,1).
\end{split}
\]The rest of the terms can be estimated similarly, thus the proof is complete.
\end{proof}

\section{Bloom inequalities for multilinear commutators}\label{SecBloom}

%Among the essential ingredients of the linear argument in Theorem \ref{linear2}, the Cauchy integral formula approach is very robust and doesn't post any actual difficulties in the multilinear setting, while the comparison between weights is the major obstacle one needs to overcome here. Note that the restriction {\color{red}{$p>1$}} in the assumption comes from the fact that Minkowski inequality only applies to $L^p$ when $p>1$. 

%comment on how it should be simpler when there is $b$ missing in some components if one only looks for one-weight estimate.

%Mention the one-weight work of P\'erez (sharp one-weight, linear CZ, but different $b_j$) and the recent two-weight work in the linear CZ case via sparse by Lerner. Also Lerner et al's 09 advances paper for multilinear weights and commutator one weight

\subsection{First order multilinear commutators}\label{SubSecFirst}

%\begin{theorem}
%Let $\bm{P} = (p_1, \cdots,p_m)$ with $1 < p_1,\cdots,p_m < \infty$ and $1/p_1 + \cdots + 1/p_m = 1/p,$ and let $\bm{w} =(w_1, \ldots, w_m)$ where $w_i$'s are weights. Assume that $w_{1}, \lambda_1  \in A_{p_1}$, and that $\bm{w}^1 = (w_2, \ldots, w_m)$ satisfies the $A_{\bm{P}^1}$ condition, where $\bm{P}^1 = (p_2, \ldots, p_m)$ with $\sum_{i=2}^m \frac{1}{p_i} = \frac{1}{q}.$ Let $\boldsymbol{\alpha} \in U_m,$ $\epsilon =(\epsilon_I)_{I\in \D}$ be bounded, and $b$ be locally integrable. Then for $\displaystyle \bm{\mu}_1 = (\lambda_1, w_2, \ldots, w_m)$ and $ \nu_1 = \left(\frac{w_1}{\lambda_1}\right)^{\frac{1}{p_1}},$ we have
%$$ \left\Vert [b, T_\epsilon^{\boldsymbol{\alpha}}]_1(\bm{f}) \right\Vert_{L^p(\nu_{\bm{\mu}_1})} \lesssim C(\lambda_1, \bm{w}, \bm{P}) \left\Vert b\right\Vert_{BMO_{\nu_1}}\prod_{i=1}^m \norm{f_i}_{L^{p_i}(w_i)},$$
%where $\displaystyle C(\lambda_1, \bm{w}, \bm{P}) = [w_1]_{A_{p_1}}^{\frac{1}{p_1}\max\left\{p_1, p_{1}'\right\}}[\lambda_1]_{A_{p_1}}^{\frac{1}{p_1}\max\left\{p_1, p_1', \cdots, p_m'\right\}} [\bm{w}^1]_{A_{\bm{P}^1}}^{\frac{1}{q}\max\left\{q, p_1', \cdots, p_m'\right\}}.$\\
%%Similar estimates also hold for the commutators $[b, T_\epsilon^{\boldsymbol{\alpha}}]_j$ with $j \in \{2, \ldots, m\}.$
%\end{theorem}

According to Proposition \ref{CommDomCZO} and \ref{CommDomDya}, Theorem \ref{basethm} would follow from the two-weight inequality below for the multilinear sparse operator adapted to the symbol functions $\bm{b}$ of the commutator.
\begin{lemma}\label{SparseBloom}
Let $\vec{p}=(p_1,\ldots,p_m)$ with $1<p_1,\ldots,p_m<\infty$ and $1/p=\sum_{j=1}^m1/p_j$. For any $I=\{i_1,\ldots,i_\ell\}\subset\{1,\ldots,m\}$, let $\bm{b}=(b_{i_1},\ldots,b_{i_\ell})$ with $b_{i_s}\in\text{BMO}(\nu_{i_s})$, $s=1,\ldots,\ell$. Given multilinear $A_{\vec{p}}$ weights $\vec{\mu}=(\mu_1,\ldots,\mu_m)$ and $\vec{\lambda}=(\lambda_1,\ldots,\lambda_m)$ such that $\mu_{i_s},\lambda_{i_s}\in A_{p_j}$, $\forall s=1,\ldots,\ell$ while $\mu_j=\lambda_j$, $\forall j\notin I$. Denote $\vec{q}=(p_j)_{j\notin I}$, $\vec{w}=(\mu_j)_{j\notin I}$ and assume that the $(m-\ell)$-linear weight $\vec{w}\in A_{\vec{q}}$. Let $\vec{\nu}=(\nu_{i_1},\ldots,\nu_{i_\ell})$ satisfy $\nu_{i_s}=(\mu_{i_s}/\lambda_{i_s})^{1/p_{i_s}}$. Then for any sparse collection $\mathcal{S}$, and $\vec{\gamma}\in\{1,2\}^\ell$, there holds
\[
\|\mathcal{A}^{\vec{\gamma}}_{\mathcal{S},\bm{b}}(\bm{f})\|_{L^p(\nu_{\vec\lambda})}\lesssim C(\vec{\mu},\vec{\lambda},\vec{p}) \prod_{s=1}^\ell\|b_{i_s}\|_{\text{BMO}(\nu_{i_s})}\prod_{j=1}^m\|f_j\|_{L^{p_j}(\mu_j)},
\]where
\[
C(\vec{\mu},\vec{\lambda},\vec{p})=\left(\prod_{s=1}^\ell[\mu_{i_s}]_{A_{p_{i_s}}}^{\max(1,\frac{1}{p_{i_s}-1})}[\lambda_{i_s}]_{A_{p_{i_s}}}^{\frac{1}{p_{i_s}}\max(p_{i_s}, q,p'_1,\ldots,p'_m)}\right)[\vec{w}]_{A_{\vec{q}}}^{\frac{1}{q}\max(q,p'_1,\ldots,p'_m)},\quad 1/q:=\sum_{j\notin I}1/q_j.
\]
\end{lemma}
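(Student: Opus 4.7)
The plan is to dualize the $L^p(\nu_{\vec\lambda})$ norm against a test function $g \in L^{p'}(\nu_{\vec\lambda})$, perform per-cube H\"older reductions to extract each BMO norm $\|b_{i_s}\|_{\text{BMO}(\nu_{i_s})}$ together with an auxiliary factor $\langle \nu_{i_s}\rangle_Q$, and then invoke a sharp multilinear sparse-$A_{\vec p}$ estimate (Li--Moen--Sun) to close the bound. When $p < 1$ one first rewrites the desired bound as an $(m+1)$-linear sparse pairing so that the same machinery applies.

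Expand the sparse sum after pairing with $g\,\nu_{\vec\lambda}$, and partition $\{i_1,\ldots,i_\ell\} = I_1 \cup I_2$ according to $\gamma_{i_s}\in\{1,2\}$. For $s \in I_2$ the factor $\Gamma(b_{i_s},f_{i_s},Q,2)$ is a constant on $Q$ and splits by H\"older inside the average into $\langle|b_{i_s}-\langle b_{i_s}\rangle_Q|^{r_s}\rangle_Q^{1/r_s}\langle|f_{i_s}|^{r_s'}\rangle_Q^{1/r_s'}$ for a small $r_s > 1$. For $s \in I_1$ the pointwise factor $|b_{i_s}-\langle b_{i_s}\rangle_Q|$ is integrated against $g\,\nu_{\vec\lambda}$ on $Q$ and split by generalized H\"older into $|Q|\prod_{s\in I_1}\langle|b_{i_s}-\langle b_{i_s}\rangle_Q|^{r_s}\rangle_Q^{1/r_s}\cdot\langle(g\nu_{\vec\lambda})^{\tilde r}\rangle_Q^{1/\tilde r}$ for a small $\tilde r > 1$. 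Since $\nu_{i_s} = (\mu_{i_s}/\lambda_{i_s})^{1/p_{i_s}} \in A_2 \subset A_\infty$ under the hypotheses, a Bloom-type John--Nirenberg inequality yields
\[
\langle|b_{i_s}-\langle b_{i_s}\rangle_Q|^{r_s}\rangle_Q^{1/r_s} \lesssim \|b_{i_s}\|_{\text{BMO}(\nu_{i_s})}\,\langle\nu_{i_s}\rangle_Q
\]
for $r_s$ sufficiently close to $1$, extracting the $\text{BMO}(\nu_{i_s})$ norm cleanly.

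After these reductions the problem is reduced to controlling
\[
\sum_{Q\in\mathcal{S}}|Q|\,\prod_{s=1}^\ell\langle\nu_{i_s}\rangle_Q \cdot \prod_{s=1}^\ell\langle|f_{i_s}|^{\sigma_s}\rangle_Q^{1/\sigma_s}\prod_{j\notin I}\langle|f_j|\rangle_Q \cdot \langle(g\nu_{\vec\lambda})^{\tilde r}\rangle_Q^{1/\tilde r},
\]
with $\sigma_s \in \{1, r_s'\}$. Each factor $\langle\nu_{i_s}\rangle_Q$ is absorbed by writing $\nu_{i_s} = \mu_{i_s}^{1/p_{i_s}}\lambda_{i_s}^{-1/p_{i_s}}$ and invoking H\"older with exponents $(p_{i_s},p_{i_s}')$, redistributing $\mu_{i_s}$ into the $L^{p_{i_s}}(\mu_{i_s})$ factor for $f_{i_s}$ and $\lambda_{i_s}^{1-p_{i_s}'}$ into the $g$-factor. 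The surviving object is a multilinear sparse pairing carrying the multilinear weight $\vec\mu$ on the $f$-side and $\vec\lambda$ on the $g$-side, which is bounded by the sharp multilinear $A_{\vec p}$ sparse estimate.

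The main obstacle lies in the exponent bookkeeping: the parameters $r_s$ and $\tilde r$ must be taken close enough to $1$ that the $L^{r_s'}$ and $L^{\tilde r}$ inflations do not degrade the sharpness of the Muckenhoupt constants, but large enough that the Bloom John--Nirenberg step applies. Choosing $r_s - 1 \sim [\nu_{i_s}]_{A_\infty}^{-1}$ and $\tilde r - 1 \sim [\nu_{\vec\lambda}]_{A_\infty}^{-1}$, together with sharp reverse-H\"older estimates for $\mu_{i_s}, \lambda_{i_s} \in A_{p_{i_s}}$, yields the precise exponents $\max(1,1/(p_{i_s}-1))$ on $[\mu_{i_s}]_{A_{p_{i_s}}}$, $\frac{1}{p_{i_s}}\max(p_{i_s},q,p'_1,\ldots,p'_m)$ on $[\lambda_{i_s}]_{A_{p_{i_s}}}$, and $\frac{1}{q}\max(q,p'_1,\ldots,p'_m)$ on $[\vec w]_{A_{\vec q}}$ stated in $C(\vec\mu,\vec\lambda,\vec p)$.
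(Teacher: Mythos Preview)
Your route is different from the paper's, and as written it has a real gap in the quantitative part.

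The paper never dualizes and never introduces bump exponents $r_s,\tilde r>1$. For the $\gamma=2$ factors it applies Lerner's pointwise sparse domination of the oscillation, $|b_s-\langle b_s\rangle_Q|\chi_Q\lesssim\sum_{J\in\widetilde{\mathcal S},\,J\subset Q}\langle|b_s-\langle b_s\rangle_J|\rangle_J\chi_J$, on a larger sparse family $\widetilde{\mathcal S}\supset\mathcal S$; integrating against $|f_s|$ this replaces $f_s$ by $\mathcal A^1_{\widetilde{\mathcal S}}(f_s)\,\nu_s$ at the cost of exactly $\|b_s\|_{\mathrm{BMO}(\nu_s)}$, and then $\|\mathcal A^1_{\widetilde{\mathcal S}}(f_s)\nu_s\|_{L^{p_s}(\lambda_s)}=\|\mathcal A^1_{\widetilde{\mathcal S}}(f_s)\|_{L^{p_s}(\mu_s)}$ together with the sharp linear sparse bound produces $[\mu_s]_{A_{p_s}}^{\max(1,1/(p_s-1))}$. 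For the $\gamma=1$ factors the paper pulls them outside the $Q$-sum to form $\mathcal T_{b_s}(f_s)=\sum_{J\in\mathcal S}|b_s-\langle b_s\rangle_J|\langle|f_s|\rangle_J\chi_J$, separates variables by H\"older in $x$, and cites Lerner's sharp two-weight bound $\|\mathcal T_{b_s}(f_s)\|_{L^{p_s}(\lambda_s)}\lesssim([\mu_s]_{A_{p_s}}[\lambda_s]_{A_{p_s}})^{\max(1,1/(p_s-1))}\|b_s\|_{\mathrm{BMO}(\nu_s)}\|f_s\|_{L^{p_s}(\mu_s)}$. What remains is an honest $(m-\ell_1)$-linear sparse operator on the weight vector $(\lambda_{\ell_1+1},\ldots,\lambda_{\ell_2},\mu_{\ell_2+1},\ldots,\mu_m)$, to which the Li--Moen--Sun constant applies verbatim. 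Every exponent in $C(\vec\mu,\vec\lambda,\vec p)$ is read off directly from a known sharp inequality; no reverse H\"older, no self-improvement.

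Your last paragraph, by contrast, is where the proof would actually happen and it is entirely asserted. After your H\"older splits the $f_{i_s}$-averages carry exponents $\sigma_s=r_s'>1$ and the $g\nu_{\vec\lambda}$-average carries $\tilde r>1$, so you are no longer facing the standard multilinear sparse form but an $L^r$-bumped one; closing would require a bumped sharp estimate and absorbing the losses coming from (a) your ``Bloom-type John--Nirenberg'' step, which in the form $\langle|b-\langle b\rangle_Q|^{r}\rangle_Q^{1/r}\lesssim\|b\|_{\mathrm{BMO}(\nu)}\langle\nu\rangle_Q$ is not a standard citable inequality and whose implicit constant in $r$ and $[\nu]_{A_\infty}$ you never track, (b) the reverse-H\"older passage back from $\sigma_s,\tilde r$ to exponent $1$, and (c) the H\"older absorption of $\langle\nu_{i_s}\rangle_Q$, which you describe only in words. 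The claim that all of these losses conspire to produce \emph{exactly} the exponents $\max(1,\frac{1}{p_{i_s}-1})$ on $[\mu_{i_s}]_{A_{p_{i_s}}}$, $\frac{1}{p_{i_s}}\max(p_{i_s},q,p_1',\ldots,p_m')$ on $[\lambda_{i_s}]_{A_{p_{i_s}}}$, and $\frac{1}{q}\max(q,p_1',\ldots,p_m')$ on $[\vec w]_{A_{\vec q}}$ is the entire quantitative content of the lemma, and you have not carried it out.
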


\begin{proof}
%We induct on the number of linearity. In the linear case, this result is already proven in Lerner. Now fix $m$ and assume that the lemma holds true for $k$-linear sparse operator whenever $k<m$. 
Let $0\leq\ell_1\leq\ell_2\leq m$ and consider the sparse operator
\[
\mathcal{A}(\bm{f})=\sum_{Q\in\mathcal{S}}\left(\prod_{s=1}^{\ell_1}|b_s-\langle b_s\rangle_Q|\langle |f_s|\rangle_Q\right)\left(\prod_{s=\ell_1+1}^{\ell_2}\left\langle \left|\left(b_s-\langle b_s\rangle_Q\right)f_s\right|\right\rangle_Q\right)\left(\prod_{s=\ell_2+1}^m\langle |f_s|\rangle_Q\right)\chi_Q,
\]where we have omitted the dependence on $\mathcal{S}$ and $\bm{b}$ for the sake of brevity. By symmetry, it suffices to prove the lemma for $\mathcal{A}$ (with $\ell$=$\ell_2$) and we assume $0<\ell_1<\ell_2<m$ as this is the most difficult case.

According to Lemma 5.1 in \cite{Lerner}, there exists a sparse collection $\widetilde{\mathcal{S}}$ of dyadic cubes such that $\mathcal{S} \subset \widetilde{\mathcal{S}},$ and for a.e. $x \in Q \in \mathcal{S},$ $$ \left| b(x) - \La b \Ra_Q\right| \leq C_1 \sum_{J\in \widetilde{\mathcal{S}}:\,J\subseteq Q} \left\La \left| b - \La b \Ra_J \right| \right\Ra_J \chi_J.$$ 
Applying this result iteratively for $(\ell_2-\ell_1)$ times, one can find a sparse collection (still denoted as $\widetilde{S}$) such that $\mathcal{S}\subset\widetilde{\mathcal{S}}$ and for a.e. $x\in Q\in\mathcal{S}$,
\[
\left| b_s(x) - \La b_s \Ra_Q\right| \leq C_1 \sum_{J\in \widetilde{\mathcal{S}}:\,J\subseteq Q} \left\La \left| b_s - \La b_s \Ra_J \right| \right\Ra_J \chi_J,\quad \forall \ell_1+1\leq s\leq\ell_2.
\]
Therefore, for $\ell_1+1\leq s\leq \ell_2$,
\begin{eqnarray*}
\left\La \left| \left(b_s - \La b_s \Ra_Q\right) f_s \right| \right\Ra_Q
&\lesssim&\frac{1}{|Q|} \int_Q \sum_{J\in \widetilde{\mathcal{S}}, J\subseteq Q} \La \left| b_s - \La b_s \Ra_J \right| \Ra_J |f_s| \chi_J\\
&\leq& \frac{1}{|Q|} \left\Vert b_s\right\Vert_{\text{BMO}(\nu_s)}\sum_{J\in \widetilde{\mathcal{S}}, J\subseteq Q} \La |f_s| \Ra_J \nu_s(J)\\
&=& \left\Vert b_s\right\Vert_{\text{BMO}(\nu_s)}\left\La \mathcal{A}^1_{\widetilde{\mathcal{S}}}(f_s) \nu_s \right\Ra_Q,
\end{eqnarray*}
where $\mathcal{A}^1_{\widetilde{\mathcal{S}}}(f):=\sum_{Q\in\widetilde{\mathcal{S}}}\langle |f|\rangle_Q\chi_Q$ is the classical linear sparse operator. This implies that
\[
\mathcal{A}(\bm{f})\lesssim \sum_{Q\in\mathcal{S}}\left(\prod_{s=1}^{\ell_1}|b_s-\langle b_s\rangle_Q|\langle |f_s|\rangle_Q\right)\left(\prod_{s=\ell_1+1}^{\ell_2}\|b_s\|_{\text{BMO}(\nu_s)}\left\langle \mathcal{A}^1_{\widetilde{\mathcal{S}}}(f_s)\nu_s\right\rangle_Q\right)\left(\prod_{s=\ell_2+1}^m\langle |f_s|\rangle_Q\right)\chi_Q.
\]

Next, for any $1\leq s\leq\ell_1$ and $Q\in\mathcal{S}$, the trivial estimate
\[
|b_s-\langle b_s\rangle_Q|\langle |f_s|\rangle_Q\chi_Q\leq \sum_{J\in\mathcal{S}}|b_s-\langle b_s\rangle_J|\langle |f_s|\rangle_J\chi_J
\]implies that $\mathcal{A}(\bm{f})$ admits the further domination
\[
\begin{split}
\mathcal{A}(\bm{f})\lesssim&\left(\prod_{s=1}^{\ell_1}\left(\sum_{J\in\mathcal{S}}|b_s-\langle b_s\rangle_J|\langle |f_s|\rangle_J\chi_J\right)\right)\sum_{Q\in\mathcal{S}}\left(\prod_{s=\ell_1+1}^{\ell_2}\|b_s\|_{\text{BMO}(\nu_s)}\left\langle \mathcal{A}^1_{\widetilde{\mathcal{S}}}(f_s)\nu_s\right\rangle_Q\right)\left(\prod_{s=\ell_2+1}^m\langle |f_s|\rangle_Q\right)\chi_Q\\
=: &\left(\prod_{s=1}^{\ell_1}\mathcal{T}_{b_s}(f_s)\right)\mathcal{A}^{m-\ell_1}_{\mathcal{S}}\left(\mathcal{A}^1_{\widetilde{\mathcal{S}}}(f_{\ell_1+1})\nu_{\ell_1+1},\ldots,\mathcal{A}^1_{\widetilde{\mathcal{S}}}(f_{\ell_2})\nu_{\ell_2},f_{\ell_2+1},\ldots,f_m\right)\left(\prod_{s=\ell_1+1}^{\ell_2}\|b_s\|_{\text{BMO}(\nu_s)}\right)\\
=: &\left(\prod_{s=1}^{\ell_1}\mathcal{T}_{b_s}(f_s)\right) \left(\prod_{s=\ell_1+1}^{\ell_2}\|b_s\|_{\text{BMO}(\nu_s)}\right) \mathcal{B},
\end{split}
\]where $\mathcal{A}^{m-\ell_1}_{\mathcal{S}}$ denotes the classical $(m-\ell_1)$-linear sparse operator.

Fix $1<p_1,\ldots,p_m<\infty$, $1/p=1/p_1+\cdots+1/p_m$. Let weights $\vec{\lambda}=(\lambda_1,\ldots,\lambda_{\ell_2},\mu_{\ell_2+1},\ldots,\mu_m)$, $\vec{\mu}=(\mu_1,\ldots,\mu_m)$ be such that $\lambda_s,\mu_s\in A_{p_s}$, $s=1,\ldots,\ell_2$ and $\vec{w}=(\mu_{\ell_2+1},\ldots,\mu_m)\in A_{\vec{q}}$ is a $(m-\ell_2)$-linear weight with $\vec{q}=(p_{\ell_2+1},\ldots,p_m)$. We also recall the notation $\nu_s=(\mu_s/\lambda_s)^{1/p_s}$, $s=1,\ldots,\ell_2$. Then by H\"older's inequality,
\[
\begin{split}
&\|\mathcal{A}(\bm{f})\|_{L^p(\nu_{\vec\lambda})}\\
\lesssim&\left(\prod_{s=\ell_1+1}^{\ell_2}\|b_s\|_{\text{BMO}(\nu_s)}\right)\left(\int \left|\prod_{s=1}^{\ell_1}\mathcal{T}_{b_s}(f_s)\lambda_s^{1/p_s}\right|^p\left|\mathcal{B}\right|^p\left(\prod_{s=\ell_1+1}^{\ell_2}\lambda_s^{p/p_s}\right)\left(\prod_{s=\ell_2+1}^m\mu_s^{p/p_s}\right)\,\d x\right)^{1/p}\\
\leq &\left(\prod_{s=\ell_1+1}^{\ell_2}\|b_s\|_{\text{BMO}(\nu_s)}\right)\prod_{s=1}^{\ell_1}\left(\int \left|\mathcal{T}_{b_s}(f_s)\right|^{p_s}\lambda_s\,\d x\right)^{1/p_s}\left(\int \left|\mathcal{B}\right|^{r}\left(\prod_{s=\ell_1+1}^{\ell_2}\lambda_s^{r/p_s}\right)\left(\prod_{s=\ell_2+1}^m\mu_s^{r/p_s}\right)\,\d x\right)^{1/r},
\end{split}
\]where $1/r:=1-\sum_{s=1}^{\ell_1}1/p_s$. It is shown in \cite{Lerner} that
\[
\|\mathcal{T}_{b_s}(f_s)\|_{L^{p_s}(\lambda_s)}\lesssim \left([\mu_s]_{A_{p_s}}[\lambda_s]_{A_{p_s}}\right)^{\max\left(1, \frac{1}{p_s-1}\right)}\|b_s\|_{\text{BMO}(\nu_s)}\|f_s\|_{L^{p_s}(\mu_s)},\quad s=1,\ldots,\ell_1.
\]Observing that $1/r=\sum_{s=\ell_1+1}^m 1/p_s$, one can apply the well known sharp weighted bound of the classical $(m-\ell_1)$-linear sparse operator to obtain
\[
\begin{split}
&\left(\int \left|\mathcal{B}\right|^{r}\left(\prod_{s=\ell_1+1}^{\ell_2}\lambda_s^{r/p_s}\right)\left(\prod_{s=\ell_2+1}^m\mu_s^{r/p_s}\right)\,\d x\right)^{1/r}\\
\lesssim &[\vec{w}_1]_{A_{\vec{r}}}^{\max\left(1,\frac{p'_{\ell_1+1}}{r},\ldots,\frac{p'_m}{r}\right)}\left(\prod_{s=\ell_1+1}^{\ell_2}\left\|\mathcal{A}^1_{\widetilde{\mathcal{S}}}(f_s)\nu_s\right\|_{L^{p_s}(\lambda_s)}\right)\left(\prod_{s=\ell_2+1}^m\|f_s\|_{L^{p_s}(\mu_s)}\right)
\end{split}
\]where $\vec{w}_1:=(\lambda_{\ell_1+1},\ldots,\lambda_{\ell_2},\mu_{\ell_2+1},\ldots,\mu_m)$ and $\vec{r}:=(p_{\ell_1+1},\ldots,p_m)$. Note that 
\[
[\vec{w}_1]_{A_{\vec{r}}}\leq \left(\prod_{s=\ell_1+1}^{\ell_2}[\lambda_s]_{A_{p_s}}^{\frac{r}{p_s}}\right)[\vec{w}]_{A_{\vec{q}}}^{\frac{r}{q}},
\]where $1/q=1/p_{\ell_2+1}+\cdots+1/p_m$, and 
\[
\left\|\mathcal{A}^1_{\widetilde{\mathcal{S}}}(f_s)\nu_s\right\|_{L^{p_s}(\lambda_s)}=\left\|\mathcal{A}^1_{\widetilde{\mathcal{S}}}(f_s)\right\|_{L^{p_s}(\mu_s)}\lesssim [\mu_s]_{A_{p_s}}^{\max\left(1,\frac{1}{p_s-1}\right)}\|f_s\|_{L^{p_s}(\mu_s)},\quad s=\ell_1+1,\ldots,\ell_2.
\]Collecting together the estimates above, one has
\[
\|\mathcal{A}(\bm{f})\|_{L^p(\nu_{\vec\lambda})}\lesssim C'(\vec\mu,\vec\lambda,\vec{p})\left(\prod_{s=1}^{\ell_2}\|b_s\|_{\text{BMO}(\nu_s)}\right)\left(\prod_{s=1}^m\|f_s\|_{L^{p_s}(\mu_s)}\right),
\]where
\[
\begin{split}
C'(\vec\mu,\vec\lambda,\vec{p})=&\left(\prod_{s=1}^{\ell_1}\left([\mu_s]_{A_{p_s}}[\lambda_s]_{A_{p_s}}\right)^{\max\left(1, \frac{1}{p_s-1}\right)}\right)\cdot\\
&\quad\left(\prod_{s=\ell_1+1}^{\ell_2}[\mu_s]_{A_{p_s}}^{\max\left(1,\frac{1}{p_s-1}\right)}[\lambda_s]_{A_{p_s}}^{\frac{1}{p_s}\max\left(r,p'_{\ell_1+1},\ldots,p'_m\right)}\right)[\vec{w}]_{A_{\vec{q}}}^{\frac{1}{q}\max\left(r,p'_{\ell_1+1},\ldots,p'_m\right)}.
\end{split}
\]

The estimate above holds true for the degenerate cases $\ell_1=0$, $\ell_1=\ell_2=0$, or $\ell_1=\ell_2$ as well, therefore, maximizing $C'$ over $p\leq r\leq q$ gives
\[
\begin{split}
C'(\vec\mu,\vec\lambda,\vec{p})\leq &\left(\prod_{s=1}^{\ell_1}\left([\mu_s]_{A_{p_s}}[\lambda_s]_{A_{p_s}}\right)^{\max\left(1, \frac{1}{p_s-1}\right)}\right)\cdot\\
&\quad\left(\prod_{s=\ell_1+1}^{\ell_2}[\mu_s]_{A_{p_s}}^{\max\left(1,\frac{1}{p_s-1}\right)}[\lambda_s]_{A_{p_s}}^{\frac{1}{p_s}\max\left(q,p'_{\ell_1+1},\ldots,p'_m\right)}\right)[\vec{w}]_{A_{\vec{q}}}^{\frac{1}{q}\max\left(q,p'_{\ell_1+1},\ldots,p'_m\right)}\\
\leq &C(\vec\mu,\vec\lambda,\vec{p}),
\end{split}
\]which completes the proof.

\end{proof}

\subsection{Higher order multilinear commutators}\label{Smulti1}

We prove Theorem \ref{multi2} in this subsection, which then immediately implies Corollary \ref{higherordercor}, the two-weight inequality for higher order multilinear commutators. We assume that $k_j\geq 1$, $\forall j$. The case when some $k_j$'s are $0$ is easier but requires extra care of notations, which is why we omit. 

Given any function $h$, define its multiplication operator in the $\beta$-th component by
\[
M^\beta_h:\,(f_1,\ldots,f_m)\mapsto (f_1,\ldots, hf_\beta,\ldots,f_m),\quad \beta\in\{1,\ldots,m\}.
\]Let
\[
F(\vec{z}):=e^{\sum_{j=1}^m\sum_{i=1}^{k_j} b_j^iz_j^i}\widetilde{T}\circ\big[M^{m}_{e^{-\sum_{i=1}^{k_m}b_m^iz_m^i}}\circ\cdots\circ M^{1}_{e^{-\sum_{i=1}^{k_1}b_1^iz_1^i}}\big],
\]then there holds
\[
C^{\vec{k},m}_{\{\vec{b}\}}(\widetilde{T})=\partial_{z_1^1}\cdots\partial_{z_m^{k_m}}F(\vec{0}).
\]The Cauchy integral formula on the polydiscs then implies that
\[
\begin{split}
&\left\|C^{\vec{k},m}_{\{\vec{b}\}}(\widetilde{T})\right\|_{L^{p_1}(\mu_1)\times\cdots\times L^{p_m}(\mu_m)\to L^p(\nu_{\vec\lambda})}\\
\leq&\frac{1}{(2\pi)^{\sum_{j=1}^mk_j}}\oint _{|z_1^1|=\delta_1^1}\cdots\oint_{|z_m^{k_m}|=\delta_m^{k_m}}\|\widetilde{T}\|_{\prod_{j=1}^mL^{p_j}\left(e^{p_j\text{Re}(\sum_{i=1}^{k_j}b_j^iz_j^i)}\mu_j\right)\to L^p\left(e^{p\text{Re}(\sum_{j=1}^m\sum_{i=1}^{k_j}b_j^iz_j^i)}\nu_{\vec\lambda}\right)}\frac{|\d z_1^1|\cdots |\d z_m^{k_m}|}{(\delta_1^1)^2\cdots (\delta_m^{k_m})^2}\\
\leq&\frac{1}{(2\pi)^{\sum_{j=1}^mk_j}}\oint _{|z_1^1|=\delta_1^1}\cdots\oint_{|z_m^{k_m}|=\delta_m^{k_m}}C_{n,m,\vec{p},\widetilde{T}}\left([\vec{w}]_{A_{\vec{p}}},[\vec{v}]_{A_{\vec{p}}}\right)\frac{|\d z_1^1|\cdots |\d z_m^{k_m}|}{(\delta_1^1)^2\cdots(\delta_m^{k_m})^2},
\end{split}
\]where the new multiple weights $\vec{w}, \vec{v}$ are defined as 
\[
w_j:=e^{p_j\text{Re}(\sum_{i=1}^{k_j}b_j^iz_j^i)}\mu_j,\quad v_j:=e^{p_j\text{Re}(\sum_{i=1}^{k_j}b_j^iz_j^i)}\lambda_j,\qquad 1\leq j\leq m,
\]and observe that
\[
\frac{w_j}{v_j}=\frac{\mu_j}{\lambda_j}=\nu_j^{p_j}.
\]
%{\color{red}{Now, choosing
%\[
%\delta_j:=\frac{\epsilon_{n,m,\vec{p}}}{\max((\vec{w})_{A_{\vec{p}}},(\lambda)_{A_p})\|b_j\|_{\text{BMO}}},
%\] (Need to change this to something similar in Theorem \ref{linear2} and take into account of the partition $\{I_s\}$ when deciding the order of iteration) iterating Lemma 2.1 of \cite{Hy} shows that
%\[
%[e^{p\text{Re}(\sum_{j=1}^kb_jz_j)}\lambda]_{A_{mp}}\leq C_{n,m,\vec{p}} [\lambda]_{A_{mp}},
%\]and Lemma \ref{multilem} below guarantees as well that
%\[
%[\vec{v}]_{A_{\vec{p}}}\leq C_{n,m,\vec{p}}[\vec{w}]_{A_{\vec{p}}}.
%\]}}
Note that in the first inequality of the display above, one has applied the Minkowski inequality to pass the operator norm inside the integral, which is why the assumption $p\geq 1$ is necessary. Applying Lemma \ref{multilem} below iteratively, one can choose
\[
\delta_j^{i}=\frac{\min\left(\epsilon_{n,m,\vec{p},\vec\mu},\epsilon_{n,m,\vec{p},\vec\lambda}\right)}{p_j\|b_j^i\|_{\text{BMO}}},\quad i=1,\ldots, k_j, \,j=1,\ldots,m,
\]and obtain
\[
C_{n,m,\vec{p},\widetilde{T}}\left([\vec{w}]_{A_{\vec{p}}},[\vec{v}]_{A_{\vec{p}}}\right)\leq C_{n,m,\vec{p},\widetilde{T}}\left(C_{n,m,\vec{p}}[\vec\mu]_{A_{\vec{p}}},C_{n,m,\vec{p}}[\vec\lambda]_{A_{\vec{p}}}\right)=:C'_{n,m,\vec{p},\widetilde{T}}\left([\vec{\mu}]_{A_{\vec{p}}},[\vec\lambda]_{A_{\vec{p}}}\right)
\]by monotonicity of $C_{n,m,\vec{p},\widetilde{T}}$. Then the proof is concluded by the calculation
\[
\begin{split}
&\left\|C^{\vec{k},m}_{\{\vec{b}\}}(\widetilde{T})\right\|_{L^{p_1}(\mu_1)\times\cdots\times L^{p_m}(\mu_m)\to L^p(\nu_{\vec\lambda})}\\
\leq&\frac{1}{\delta_1^1\cdots\delta_m^{k_m}}C'_{n,m,\vec{p},\widetilde{T}}\left([\vec{\mu}]_{A_{\vec{p}}},[\vec\lambda]_{A_{\vec{p}}}\right)\leq C_{n,m,\vec{p},\vec{k},\widetilde{T}}\left([\vec{\mu}]_{A_p},[\vec\lambda]_{A_{\vec{p}}}\right)\prod_{j=1}^m\prod_{i=1}^{k_j}\|b_j^i\|_{\text{BMO}}.
\end{split}
\]

We are left with justifying the following result, which captures the relation between BMO and multilinear $A_{\vec{p}}$ weights. 
\begin{lemma}\label{multilem}
Let $\vec{w}\in A_{\vec{p}}$, where $\vec{p}=(p_1,\ldots,p_m)$, $1/p=1/p_1+\cdots+1/p_m$, $1<p_j<\infty$, and $\vec{b}=(b_1,\ldots,b_m)\in\big(\text{BMO}(\mathbb{R}^n)\big)^m$. Define
\[
v_j:=e^{\text{Re}(b_jz_j)}w_j,\quad j=1,\ldots, m.
\]Then there are constants $\epsilon_{n,m,\vec{p},\vec{w}}, C_{n,m,\vec{p}}>0$ so that
\[
[\vec{v}]_{A_{\vec{p}}}\leq C_{n,m,\vec{p}}[\vec{w}]_{A_{\vec{p}}}
\]whenever $\vec{z}\in\mathbb{C}^m$ satisfies
\[
|z_j|\leq\frac{\epsilon_{n,m,\vec{p},\vec{w}}}{\|b_j\|_{\text{BMO}}},\quad j=1,\ldots,m.
\]
\end{lemma}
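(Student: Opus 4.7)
The proof combines three standard ingredients: the algebraic cancellation of the means $\langle b_j\rangle_Q$ inside the multilinear $A_{\vec p}$ functional, the sharp reverse H\"older inequality for $A_\infty$ weights, and the John--Nirenberg exponential integrability for BMO functions.

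Since only the real part of $z_j$ enters $v_j$ when $b_j$ is real-valued, set $t_j:=\mathrm{Re}(z_j)$ and fix a cube $Q$. Splitting $b_j=(b_j-\langle b_j\rangle_Q)+\langle b_j\rangle_Q$ in each exponent, the constant prefactors $e^{t_j\langle b_j\rangle_Q/p_j}$ produced from $\bigl(\fint_Q \nu_{\vec v}\bigr)^{1/p}$ cancel exactly against the factors $e^{-t_j\langle b_j\rangle_Q/p_j}$ coming from each $\bigl(\fint_Q v_j^{1-p_j'}\bigr)^{1/p_j'}$. Writing $\widetilde b_j:=b_j-\langle b_j\rangle_Q$, it therefore suffices to bound
\[
\Bigl(\fint_Q e^{\sum_j (pt_j/p_j)\widetilde b_j}\,\nu_{\vec w}\Bigr)^{1/p}\prod_{j=1}^m\Bigl(\fint_Q e^{t_j(1-p_j')\widetilde b_j}\,w_j^{1-p_j'}\Bigr)^{1/p_j'}
\]
by a dimensional multiple of $\bigl(\fint_Q\nu_{\vec w}\bigr)^{1/p}\prod_j\bigl(\fint_Q w_j^{1-p_j'}\bigr)^{1/p_j'}$, uniformly in $Q$.

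To achieve this, apply H\"older with exponents $1+\delta$ and $(1+\delta)'$ inside each average. The exponent $\delta=\delta_{n,m,\vec p,\vec w}>0$ is furnished by the sharp reverse H\"older inequality applied to each of $\nu_{\vec w}$ and $w_j^{1-p_j'}$, which are $A_\infty$ weights with constants controlled by $[\vec w]_{A_{\vec p}}$ (via $\nu_{\vec w}\in A_{mp}$ and $w_j^{1-p_j'}\in A_{mp_j'}$), yielding $\bigl(\fint_Q\sigma^{1+\delta}\bigr)^{1/(1+\delta)}\le 2\fint_Q\sigma$. The remaining exponential averages $\fint_Q e^{(1+\delta)'\sum_j\alpha_j\widetilde b_j}$ are handled by a second H\"older split $e^{\sum_j\beta_j\widetilde b_j}\le\prod_j\bigl(e^{m\beta_j\widetilde b_j}\bigr)^{1/m}$ followed by the John--Nirenberg bound $\fint_Q e^{\alpha(b-\langle b\rangle_Q)}\le 2$, valid whenever $|\alpha|\|b\|_{\text{BMO}}\le c_n$. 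Choosing $\epsilon_{n,m,\vec p,\vec w}$ so small that $|z_j|\le\epsilon/\|b_j\|_{\text{BMO}}$ enforces $m(1+\delta)'C_{\vec p}|t_j|\|b_j\|_{\text{BMO}}\le c_n$ for every $j$ makes all John--Nirenberg estimates valid simultaneously.

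The one point requiring care is the bookkeeping of constants. The reverse H\"older exponent $\delta$, and hence $\epsilon$, depends on the $A_\infty$ constants of $\nu_{\vec w}$ and $w_j^{1-p_j'}$ and so ultimately on $[\vec w]_{A_{\vec p}}$; however, once the smallness threshold on $|z_j|$ is respected, every inequality in the chain contributes only an absolute factor ($2$ from reverse H\"older, $2$ from John--Nirenberg). Multiplying these and raising to the powers $1/p,1/p_1',\ldots,1/p_m'$ produces a comparison constant depending only on $n,m,\vec p$, so taking the supremum over $Q$ yields $[\vec v]_{A_{\vec p}}\le C_{n,m,\vec p}[\vec w]_{A_{\vec p}}$ as required.
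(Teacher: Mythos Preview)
Your proof is correct and follows essentially the same route as the paper: H\"older with a reverse-H\"older exponent to separate the exponential perturbation from the weights, reverse H\"older on $\nu_{\vec w}\in A_{mp}$ and $w_j^{1-p_j'}\in A_{mp_j'}$ to recover $[\vec w]_{A_{\vec p}}$, and John--Nirenberg to bound the exponential averages once the means have been subtracted. The only cosmetic difference is that you subtract the means before applying H\"older while the paper does so afterward, and you write $1+\delta$ where the paper writes $q$.
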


\begin{proof}
Fix a cube $Q$ and recall $\nu_{\vec{v}}=\prod_{j=1}^m v_j^{p/p_j}$, $\nu_{\vec{w}}=\prod_{j=1}^m w_j^{p/p_j}$. One has by H\"older's inequality
\[
\begin{split}
[\vec{v}]_{A_{\vec{p}}}^{1/p}=&\left(\fint_Q \nu_{\vec{v}}\right)^{1/p}\prod_{j=1}^m\left(\fint_Q v_j^{1-p'_j}\right)^{1/p'_j}\\
=&\left(\fint_Q e^{\sum_{j=1}^m\frac{p}{p_j}\text{Re}(b_jz_j)} \nu_{\vec{w}}\right)^{1/p}\prod_{j=1}^m\left(\fint_Q e^{(1-p'_j)\text{Re}(b_jz_j)}w_j^{1-p'_j}\right)^{1/p'_j}\\
\leq &\left(\fint_Q e^{\sum_{j=1}^m\frac{pq'}{p_j}\text{Re}(b_jz_j)}\right)^{1/pq'}\left(\fint_Q \nu_{\vec{w}}^q\right)^{1/pq}\prod_{j=1}^m \left(\fint_Q e^{q'(1-p'_j)\text{Re}(b_jz_j)}\right)^{1/p'_jq'}\left(\fint_Q w_j^{q(1-p'_j)}\right)^{1/p'_jq}
\end{split}
\]where $1<q<\infty$ is constant to be determined in the following. Recall that if $\vec{w}\in A_{\vec{p}}$, then $\nu_{\vec{w}}\in A_{mp}$ and $w_j^{1-p'_j}\in A_{mp'_j}$, $\forall j$. Therefore, there exists $q_0=q_0\left([\vec{w}]_{A_{\vec{p}}}\right)$ such that for all $0<q<q_0$,
\[
\left(\fint_Q \nu_{\vec{w}}^q\right)^{1/q}\lesssim \fint_Q \nu_{\vec{w}},\quad \left(\fint_Q w_j^{q(1-p'_j)}\right)^{1/q}\lesssim \fint_Q w_j^{1-p'_j},\,\forall j.
\]
Taking such a $q$, one then has
\[
\left(\fint_Q \nu_{\vec{w}}^q\right)^{1/pq}\prod_{j=1}^m\left(\fint_Q w_j^{q(1-p'_j)}\right)^{1/p'_jq}\lesssim [\vec{w}]_{A_{\vec{p}}}^{1/p}.
\]

On the other hand, observe that
\[
\begin{split}
&\quad\left(\fint_Q e^{\sum_{j=1}^m\frac{pq'}{p_j}\text{Re}(b_jz_j)}\right)^{1/pq'}\prod_{j=1}^m \left(\fint_Q e^{q'(1-p'_j)\text{Re}(b_jz_j)}\right)^{1/p'_jq'}\\
&=\left(\fint_Q e^{\sum_{j=1}^m\frac{pq'}{p_j}\left(\text{Re}(b_jz_j)-\left\langle \text{Re}(b_jz_j)\right\rangle_Q\right)}\right)^{1/pq'}\prod_{j=1}^m \left(\fint_Q e^{q'(1-p'_j)\left(\text{Re}(b_jz_j)-\left\langle \text{Re}(b_jz_j)\right\rangle_Q\right)}\right)^{1/p'_jq'}\\
&=: A^{1/pq'}\prod_{j=1}^m B_j^{1/p'_jq'}.
\end{split}
\]According to John-Nirenberg inequality, there exists constant $C_n>0$ so that 
\[
\fint_Q e^{\left|b-\langle b\rangle_Q\right|}\leq 2
\]whenever $\|b\|_{\text{BMO}}\leq C_n$. Choose $z\in\mathbb{C}^m$ satisfying
\[
|z_j|\leq \frac{C_n\min\left(\frac{1}{1-p'_j}, \frac{p_j}{p}\right)}{q'm\|b_j\|_{\text{BMO}}},\quad j=1,\ldots,m,
\]then there hold $A\leq 2$, $B_j\leq 2$, $\forall j$, hence
\[
[\vec{v}]_{A_{\vec{p}}}\lesssim 2^{mp/q'}[\vec{w}]_{A_{\vec{p}}}\leq C_{n,m,\vec{p}}[\vec{w}]_{A_{\vec{p}}}.
\]

\end{proof}

\begin{remark}
From the proof, it is easy to see that if for some $I\subset\{1,\ldots,m\}$, $b_s=0$, $\forall s\in I$, which corresponds to $v_s=w_s$, $\forall s\in I$, then the same estimate still holds true whenever $|z_j|$ is controlled for $j\notin I$. This observation allows us to iterate Lemma \ref{multilem} to complete the proof of Theorem \ref{multi2} above.
\end{remark}

\subsection{Lower bound of multilinear commutators}
We prove Theorem \ref{lower bound} in this subsection. It suffices to prove the implication ``(3)$\implies$(1)'', as ``(1)$\implies$(2)'' is a special case of Theorem \ref{basethm} and ``(2)$\implies$(3)'' is obvious. Without loss of generality, assume $\beta = 1$ and $[b, P_{\bm{\epsilon}}^{\bm{\alpha}}]_1: L^{p_1}(w_1) \times \cdots \times L^{p_m}(w_m) \rightarrow L^p(\nu_{\vec{w}})$ is bounded, where $P_{\bm{\epsilon}}^{\bm{\alpha}}$ is a Haar multiplier with respect to dyadic grid $\mathcal{D}$ such that $\alpha_j\neq \vec{1}$ for some $j\in\{2,\ldots,m\}$, and $\{|\epsilon_I|\}$ are bounded from below uniformly. 

Fix a cube $J\in\mathcal{D}$. For $j\in\{1,\ldots,m\}$, let $$f_j = 
     \begin{cases}
       \abs{J}^{1/2}h_{J}^{\alpha_j}, &\text{if }\alpha_j \neq \vec{1},\\
       \chi_{J}, \;\;\; & \text{if }\alpha_j = \vec{1}.\\
            \end{cases}$$
Then there holds $\La f_j, h_j^{\alpha_j}\Ra= |J|^{1/2}$. Moreover,
\[
\La bf_1, h_J^{\alpha_1}\Ra=\La b\Ra_{J}|J|^{1/2}.
\]Hence, the fact that $\alpha_j\neq\vec{1}$ for some $j\in\{2,\ldots,m\}$ implies that
\[
\left|[b,P_{\bm{\epsilon}}^{\bm{\alpha}}]_1(\vec{f})\right|=\left|\epsilon_J bh_J^{\alpha_{m+1}}|J|^{1/2}-\epsilon_J\La b\Ra_{J}|J|^{1/2}h_J^{\alpha_{m+1}}\right|\gtrsim\left|b-\La b\Ra_J\right|\chi_J.
\]

Therefore,
\begin{eqnarray*}
\int_J \left|b - \La b \Ra_J \right|^{\frac{1}{m}} \,\d x 
&=& \int_J \left|b - \La b \Ra_J \right|^{\frac{1}{m}} \nu_{\vec{w}}^{\frac{1}{mp}} \nu_{\vec{w}}^{-\frac{1}{mp}}\, \d x\\
&\leq& \left( \int_J \left|b - \La b \Ra_J \right|^p \nu_{\vec{w}}\,\d x\right)^{\frac{1}{mp}} \left( \int_J \nu_{\vec{w}}^{-\frac{(mp)'}{mp}}\,\d x\right)^{\frac{1}{(mp)'}}\\
&\lesssim& \left\Vert [b, P_{\bm{\epsilon}}^{\bm{\alpha}}]_1(\bm{f}) \right\Vert_{L^p{(\nu_{\vec{w}})}}^{1/m} \left( \int_J \nu_{\vec{w}}^{\frac{1}{1-mp}}\,\d x\right)^{\frac{mp-1}{mp}},
\end{eqnarray*}which by the boundedness of the commutator is less than
\begin{eqnarray*}
&\lesssim& \left(\prod_{j=1}^m \left\Vert f_j \right\Vert_{L^{p_j}{(w_j)}} \right)^{1/m}\left( \int_J \nu_{\vec{w}}^{\frac{1}{1-mp}}\,\d x\right)^{\frac{mp-1}{mp}}\\
&=& \left( \int_J \nu_{\vec{w}}^{{\frac{1}{1-mp}}}\,\d x\right)^{\frac{mp-1}{mp}} \left(\prod_{j=1}^m \left(\int_{J} w_j \,\d x \right)^{1/p_j} \right)^{1/m},
\end{eqnarray*}where the last step above follows by observing $|f_j|=\chi_J$. This further implies
\begin{equation} \label{eq:BMO}\int_J \left|b - \La b \Ra_J \right|^{\frac{1}{m}} \,\d x \lesssim \left( \int_{J} \nu_{\vec{w}}^{{\frac{1}{1-mp}}}\,\d x\right)^{\frac{mp-1}{mp}} \prod_{j=1}^m \left(\int_{J} w_j\,\d x \right)^{\frac{1}{mp_j}}.
\end{equation}
Let $w_j':= w_j^{1-p_j'}, \vec{w}': = (w_1', \ldots, w_m')$, and $\vec{p}':= (p_1', \ldots, p_m')$. Since $ w_j'\in A_{p_j'},$ $\vec{w}'$ satisfies the multilinear $A_{\vec{p}'}$ condition
\begin{equation}
\label{eq:AP'} \sup_Q\left(\fint_{Q} \nu_{\vec{w}'}\right) \prod_{j=1}^m \left( \fint_{Q} (w_j')^{1-p_j}\right)^{\frac{p/(mp-1)}{p_j}} = [\vec{w}']_{A_{\vec{p}'}} < \infty,
\end{equation}observing that $\sum_{j=1}^m \frac{1}{p_j'} = m - \sum_{j=1}^m \frac{1}{p_j}= \frac{mp - 1}{p}$.

Moreover, as $\displaystyle \nu_{\vec{w}'} = \nu_{\vec{w}}^{\frac{1}{1-mp}}$ and $(w_j')^{1-p_j} = w_j$, $\forall j$, \eqref{eq:AP'} implies that
\[
\left(\int_{J} \nu_{\vec{w}}^{\frac{1}{1-mp}} \right) \prod_{j=1}^m \left(\int_{J} w_j \right)^{\frac{p/(mp-1)}{p_j}} \leq[\vec{w}']_{A_{\vec{p}'}} \abs{J}^{1+\frac{p}{mp-1}(\frac{1}{p_1} + \cdots + \frac{1}{p_m})}=[\vec{w}']_{A_{\vec{p}'}} \abs{J}^{\frac{mp}{mp-1}}.
\]
Consequently, one deduces from \eqref{eq:BMO} that
\[
\int_J|b-\langle b\rangle_J|^{\frac{1}{m}}\,\d x\lesssim [\vec{w}']_{A_{\vec{p}'}}^{\frac{mp-1}{mp}} \abs{J},
\]which implies
$$ \left(\fint_J \left|b - \La b \Ra_J \right|^{\frac{1}{m}} \,\d x \right)^{m} \lesssim [\vec{w}']_{A_{\vec{p}'}}^{\frac{mp-1}{p}}.$$
Taking supremum over $J\in \D$ on the LHS above shows that $b \in \text{BMO}_{\mathcal{D}}$, thus the proof is complete observing that $\text{BMO}=\bigcap_{\mathcal{D}}\text{BMO}_{\mathcal{D}}$.

\begin{remark}
In the proof above, we in fact only used the property $\vec{w}'\in A_{\vec{p}'}$, which is  slightly weaker than the assumption that $w_j\in A_{p_j}$, $\forall j=1,\ldots,m$.
\end{remark}
\begin{remark}
Statement (3) of Theorem \ref{lower bound} can be further weakened, with ``For all dyadic grid $\mathcal{D}$...'' replaced by ``For $\mathcal{D}_1,\ldots,\mathcal{D}_{n+1}$ such that $\text{BMO}=\bigcap_{j=1}^{n+1}\text{BMO}_{\mathcal{D}_j}$...''. It is proved in \cite{Mei} that in order to recover the continuous BMO, it suffices to take intersection of $n+1$ shifted dyadic BMO. Therefore the weakened version of (3) still implies (1).
\end{remark}

%\section*{Acknowledgments} 
%1. proof of Lemma 3.1 (relation between multilinear weights and BMO, maybe even new?)
%2. proof of first order multilinear case
%3. can we push the multilinear result to larger range for $p<1$? Then one first needs to consider what is a substitute for $A_p$ in the first order case, and make sure that bootstrapping still works

%the one-parameter $A_\infty$ weight is defined via the following characteristic:
%\[
%[w]_{A_\infty}:=\sup_{Q\subset\mathbb{R}^n}\frac{1}{w(Q)}\int_Q M(\chi_Q w),
%\]where $w$ is a one-parameter function and $M$ denotes the Hardy-Littlewood maximal function. 

\appendix
\section{Proof of Theorem \ref{basethm} for dyadic operators}\label{SecApp}
Fix a dyadic grid $\mathcal{D}$. According to Lemma \ref{SparseBloom}, Theorem \ref{basethm} in the dyadic case follows from the following sparse domination of multilinear commutators with Haar multipliers or paraproducts. 
\begin{proposition}\label{CommDomDya} Let $T$ be either a Haar multiplier $P_{\bm{\epsilon}}^{\bm{\alpha}}$ or a paraproduct $\pi_{g,\bm{\epsilon}}^{\bm{\alpha}}$ (with symbol function $g\in\text{BMO}_{\mathcal{D}}$) as defined in Subsection \ref{SubSecDef}. Given locally integrable functions $\bm{b}=(b_{i_1},\ldots,b_{i_\ell})$ on $\mathbb{R}^n$, there exists a constant $C=C(n,T)$ so that for any bounded functions $\bm{f}=(f_1,\ldots,f_m)$ with compact support, there exists a sparse collection $\mathcal{S}=\mathcal{S}(T,\bm{f},\bm{b})\subset\mathcal{D}$ such that
\[
\left|[b_{i_1},\cdots,[b_{i_\ell},T]_{i_\ell}\cdots]_{i_1}(\bm{f})\right|\leq C\left(\sum_{\vec{\gamma}\in \{1,2\}^\ell}\mathcal{A}^{\vec{\gamma}}_{\mathcal{S},\bm{b}}(\bm{f})\right),\quad \text{a.e.}
\]where the sparse operators $\mathcal{A}^{\vec{\gamma}}_{\mathcal{S},\bm{b}}$ are defined as in Proposition \ref{CommDomCZO}.
\end{proposition}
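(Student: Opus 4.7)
The plan is to run the same type of iterative stopping-time argument as in the proof of Proposition \ref{CommDomCZO}, but simplified to a single dyadic grid $\mathcal{D}$ rather than the $3^n$ shifted grids needed in the Calder\'on-Zygmund case. For concreteness I would first carry out the bilinear full-commutator case with $m=\ell=2$ and $T=P_{\bm{\epsilon}}^{\bm{\alpha}}$; the paraproduct case and the deficient commutator case require only notational changes, and the multilinear generalization is routine. The first reduction, identical to the one quoted from \cite{Lerner} in Proposition \ref{CommDomCZO}, allows us to fix a cube $Q_0\in\mathcal{D}$, replace each $f_j$ by $f_j\chi_{Q_0}$, and look for a $\frac12$-sparse subcollection $\mathcal{S}\subset\mathcal{D}(Q_0)$ on which a pointwise analogue of \eqref{reduceQ0} holds with $3Q_0$ and $R_Q$ both replaced by $Q_0$ and $Q$ respectively. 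Note that we may use $Q_0$ in place of $3Q_0$ because Haar multipliers and paraproducts are localized on the dyadic grid: any cube $I\in\mathcal{D}$ meeting $Q_0$ is either contained in $Q_0$ or contains $Q_0$.

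The iteration step is the same as in Proposition \ref{CommDomCZO}: I would expand $[b_2,[b_1,T]_1]_2(f_1\chi_{Q_0},f_2\chi_{Q_0})$ using $[b_i,T]_i=[b_i-\langle b_i\rangle_{Q_0},T]_i$ into the eight terms $A_1,\dots,D_2$, then show that on $Q_0\setminus\bigcup_jP_j$ each ``$1$''-term is pointwise bounded by the relevant $\widetilde{\Gamma}\cdot\widetilde{\Gamma}$ product, and each ``$2$''-term (the ones involving differences $T(\cdots\chi_{Q_0})-T(\cdots\chi_{P_j})$) is bounded by the same product on each $P_j$. The stopping cubes $\{P_j\}$ are selected by a Calder\'on-Zygmund stopping argument on the same four exceptional sets $E_1,\dots,E_4$, with $\mathcal{M}_{T,Q_0}$ replaced by the dyadic maximal truncation
\[
\mathcal{M}_{T,Q_0}^{\mathcal{D}}(g_1,g_2)(x):=\sup_{Q\in\mathcal{D}:\,x\in Q\subseteq Q_0}\bigl|T(g_1\chi_{Q_0},g_2\chi_{Q_0})(x)-T(g_1\chi_Q,g_2\chi_Q)(x)\bigr|,
\]
and $\langle\cdot\rangle_{3Q_0}$ replaced by $\langle\cdot\rangle_{Q_0}$. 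The pointwise control $|T(g_1\chi_{Q_0},g_2\chi_{Q_0})(x)|\lesssim|g_1g_2|(x)+\mathcal{M}_{T,Q_0}^{\mathcal{D}}(g_1,g_2)(x)$ that fed \eqref{maxtrunc} is in fact trivial here: one simply inserts the smallest cube $Q\in\mathcal{D}(Q_0)$ containing $x$ on which $h_I^{\alpha_{m+1}}$ (or $h_I^{\alpha_{m+2}}$ in the paraproduct case) is nonconstant and bounds the pointwise value by $\mathcal{M}_{T,Q_0}^{\mathcal{D}}$ plus an explicit ``diagonal'' term controlled by $|g_1g_2|(x)$.

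The main obstacle, and the only place where something genuinely has to be proved rather than transcribed, is the weak-type endpoint bound
\[
\mathcal{M}_{T,Q_0}^{\mathcal{D}}:L^1(\mathbb{R}^n)\times L^1(\mathbb{R}^n)\longrightarrow L^{1/2,\infty}(\mathbb{R}^n),
\]
(and its $m$-linear analogue $L^1\times\cdots\times L^1\to L^{1/m,\infty}$), which is what makes the exceptional set $E$ have measure at most $2^{-n-2}|Q_0|$ for $C$ sufficiently large. For Haar multipliers this reduces, via telescoping the truncation and isolating the cubes $I$ with $Q\subsetneq I\subseteq Q_0$, to an $m$-linear dyadic square-function / maximal estimate that can be proved by first establishing the strong bound $L^{p_1}\times\cdots\times L^{p_m}\to L^p$ for $\mathcal{M}_{T,Q_0}^{\mathcal{D}}$ (by orthogonality of Haar functions and the known boundedness of $T$) and then interpolating against a trivial $L^\infty\times\cdots\times L^\infty\to L^\infty$ bound, or alternatively by a direct Calder\'on-Zygmund decomposition of each $g_j$ at a common level $\lambda^{1/m}$. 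The paraproduct case is handled the same way, using $g\in\text{BMO}_{\mathcal{D}}$ to absorb the extra Haar coefficient. Once this endpoint inequality is in hand, the exceptional-set bound $|E|\leq\frac{1}{2^{n+2}}|Q_0|$ follows from Chebyshev and Lebesgue differentiation on $Q_0$ exactly as in the proof of Proposition \ref{CommDomCZO}, completing the construction of $\mathcal{S}$ and hence the sparse domination.
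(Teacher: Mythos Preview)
Your overall plan is correct and matches the paper's: the authors explicitly say the proof ``proceeds in the same way as Proposition \ref{CommDomCZO} with minimal modification once we verify that the dyadic maximal truncated operators in this case map $L^1\times\cdots\times L^1$ into $L^{1/m,\infty}$,'' and then prove exactly that endpoint as Lemma \ref{LemTrunc}. So the reduction, the unravelling into $A_1,\dots,D_2$, the exceptional sets $E_1,\dots,E_4$, and the Calder\'on--Zygmund selection of $\{P_j\}$ all go through as you describe, with the single dyadic grid replacing the $3^n$ shifted grids.

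Where your treatment differs from the paper is in how you propose to prove the endpoint for the maximal truncation. The paper's argument is considerably cleaner than either of your sketches: after using duality to assume the \emph{output} Haar function $h_I^{\alpha_{m+1}}$ (resp.\ $h_I^{\alpha_{m+2}}$) is cancellative, they observe the pointwise bound $T_\sharp(\bm{f})(x)\leq M(T(\bm{f}))(x)$, because for any truncating cube $J$ the tail $T^J(\bm{f})(x)$ is literally the average $\langle T(\bm{f})\rangle_{J''}$ over an appropriate child $J''\ni x$. This gives the strong $L^{p_1}\times\cdots\times L^{p_m}\to L^p$ bound for free, and the passage to the weak endpoint is then handled by a cited reduction lemma (Lemma~3.5 of \cite{Kunwar}) that exploits the localization property $\mathrm{supp}\,T_\sharp(\bm{f})\subset I$ whenever some $f_j=h_I$. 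Your route~(b) via direct multilinear Calder\'on--Zygmund decomposition would work and is morally the content of that cited lemma, but your route~(a) is shakier: the ``trivial $L^\infty\times\cdots\times L^\infty\to L^\infty$ bound'' for the maximal truncation is not actually trivial (the tail sum over ancestors $I\supsetneq J$ has a number of terms growing with the depth of $J$), and in any case multilinear interpolation between a single strong bound and an $L^\infty$ endpoint does not straightforwardly yield the weak $L^{1/m}$ endpoint. I would drop route~(a) and either carry out~(b) or, better, adopt the paper's pointwise domination $T_\sharp\leq M\circ T$.
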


We omit the proof of Proposition \ref{CommDomDya}, as it proceeds in the same way as Proposition \ref{CommDomCZO} with minimal modification once we verify that the dyadic maximal truncated operators in this case map $L^1\times\cdots\times L^1$ into $L^{\frac1m,\infty}$.
\begin{lemma}\label{LemTrunc}
Let $T$ be either a Haar multiplier $P_{\bm{\epsilon}}^{\bm{\alpha}}$ or a paraproduct $\pi_{g,\bm{\epsilon}}^{\bm{\alpha}}$. Then
\[
\|T_\sharp\|_{L^{\frac1m,\infty}}\lesssim \prod_{j=1}^m\|f_j\|_{L^1}
\]where $T_\sharp f$ is the maximal truncated operator given by
\[
T_\sharp \bm{f}:=\sup_{J\in\mathcal{D}}|T^J \bm{f}|\] where
\[T^J \bm{f}:=
\begin{cases}
\sum_{\substack{I\in\mathcal{D}\\ I\supsetneq J}}\epsilon_I\La f_1,h_I^{\alpha_1}\Ra \cdots \La f_m,h_I^{\alpha_m}\Ra h_I^{\alpha_{m+1}}|I|^{-(m-1)/2} &\text{if }T=P_{\bm{\epsilon}}^{\bm{\alpha}},\\
\sum_{\substack{I\in\mathcal{D}\\ I\supsetneq J}} \epsilon_I\La g, h_I^{\alpha_1} \Ra\left(\prod_{j=1}^m \La f_j, h_I^{\alpha_{j+1}}\Ra\right) h_I^{\alpha_{m+2}}|I|^{-m/2} &\text{if }T=\pi_{g,\bm{\epsilon}}^{\bm{\alpha}}.
\end{cases}
\]
\end{lemma}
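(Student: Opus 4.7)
The plan is to follow the standard Calder\'on--Zygmund weak endpoint argument for multilinear operators (as in \cite{GrafakosTorres, Li}), taking advantage of two dyadic features: an auxiliary $L^p$ bound on $T_\sharp$ for some $p>1$, and the \emph{exact} localization of the operator applied to the bad parts of the decomposition.

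First I would establish the auxiliary $L^p$ bound. For each $x\in\mathbb{R}^n$ and $J\in\mathcal{D}$, the truncated sum $T^J\bm{f}(x)$ is a partial sum of the Haar/paraproduct expansion of $T\bm{f}$, and as $J$ ranges over $\mathcal{D}$ these partial sums take the form $\sum_{I\supseteq K}c_I(x)$ with $K\in\mathcal{D}$ containing $x$ (here $c_I$ denotes the $I$-th summand and $K=K(x,J)$ is the smallest dyadic cube containing $x$ that lies above $J$). Hence
\[
T_\sharp\bm{f}(x)=\sup_{K\in\mathcal{D},\,K\ni x}\Big|\sum_{I\supseteq K}c_I(x)\Big|,
\]
which in the cancellative-output case coincides on $K$ with the dyadic conditional expectation $\langle T\bm{f}\rangle_K$ and is thus pointwise dominated by the dyadic Hardy--Littlewood maximal function $M^d(T\bm{f})$. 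This yields $\|T_\sharp\bm{f}\|_{L^p}\lesssim\|T\bm{f}\|_{L^p}$ for $1<p<\infty$, and combining with the standard multilinear $L^{p_1}\times\cdots\times L^{p_m}\to L^p$ bound for Haar multipliers and for paraproducts with $g\in\text{BMO}_{\mathcal{D}}$ (relying on the Carleson property of $\{|\langle g,h_I^{\alpha_1}\rangle|\,|I|^{-1/2}\}_I$) gives $T_\sharp\colon L^{p_1}\times\cdots\times L^{p_m}\to L^p$ for any $1/p=\sum_j 1/p_j$ with $1<p_j<\infty$.

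Next I would normalize $\prod_j\|f_j\|_{L^1}=1$ and apply the dyadic Calder\'on--Zygmund decomposition to each $f_j$ at level $\gamma:=\lambda^{1/m}$, writing $f_j=g_j+b_j$ with $\|g_j\|_{L^\infty}\lesssim\gamma$ and $b_j=\sum_k b_{j,k}$, where each $b_{j,k}$ is supported on a dyadic cube $Q_{j,k}$ with $\int b_{j,k}=0$ and $\sum_k|Q_{j,k}|\leq\|f_j\|_{L^1}/\gamma$. Expanding $T_\sharp\bm{f}$ by sublinearity into $2^m$ terms, I would control the all-good term via the $L^p$ bound above with parameters $p_j=mp$: since $\|g_j\|_{L^{mp}}^{mp}\lesssim\gamma^{mp-1}\|f_j\|_{L^1}$, one gets
\[
|\{T_\sharp(\bm{g})>\lambda\}|\leq\lambda^{-p}\|T_\sharp(\bm{g})\|_{L^p}^p\lesssim\lambda^{-p}\gamma^{mp-1}=\lambda^{-1/m}.
\]
For each of the remaining $2^m-1$ mixed terms involving some $b_j$, the key dyadic localization is that whenever $I\supsetneq Q_{j,k}$ the Haar function $h_I^\alpha$ is constant on $Q_{j,k}$ (cancellative or not), so $\langle b_{j,k},h_I^\alpha\rangle=h_I^\alpha|_{Q_{j,k}}\int b_{j,k}=0$; only $I\subseteq Q_{j,k}$ can contribute, and the output is then supported inside $Q_{j,k}$. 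Consequently all mixed terms live on the exceptional set $E^*:=\bigcup_{j,k}Q_{j,k}$, of measure at most $m/\gamma=m\lambda^{-1/m}$, which combined with the good-part bound yields $|\{T_\sharp\bm{f}>\lambda\}|\lesssim\lambda^{-1/m}$ as required.

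The main obstacle I expect is in the auxiliary $L^p$ bound above: when the output Haar function $h_I^{\alpha_{m+1}}$ (or $h_I^{\alpha_{m+2}}$ for the paraproduct) is the non-cancellative $h_I^{\vec 1}$, the partial sums $\sum_{I\supseteq K}c_I$ no longer coincide with dyadic averages of $T\bm{f}$, and a slightly more careful Burkholder/Doob martingale maximal argument (or a direct pointwise domination of $T_\sharp\bm{f}$ by the sum of $M^d(T\bm{f})$ and a Carleson-type tail term) is required to recover the $L^p$ boundedness. Once that step is in place, both the good-part estimate via $L^p$ and the dyadic localization of the bad parts proceed without further difficulty.
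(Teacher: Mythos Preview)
Your proposal is correct and shares the paper's key observation, but does more by hand where the paper cites a ready-made reduction. Both arguments hinge on the pointwise inequality $T_\sharp(\bm{f})\le M^d(T\bm{f})$ in the cancellative-output case; your conditional-expectation remark is exactly the paper's computation $T^J(\bm{f})(x)=\langle T\bm{f}\rangle_{J''}$ for a suitable child cube $J''\ni x$. From this common point the paper simply invokes Lemma~3.5 of \cite{Kunwar}, which says that for a multi-sublinear operator $S$ with the localization property ``$\supp S(\bm{f})\subset I$ whenever some $f_j=h_I$'', a single Banach-range strong bound $S\colon L^{p_1}\times\cdots\times L^{p_m}\to L^p$ already yields $S\colon L^1\times\cdots\times L^1\to L^{1/m,\infty}$. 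Your Calder\'on--Zygmund decomposition reproves this reduction in place: the vanishing $\langle b_{j,k},h_I^\alpha\rangle=0$ for $I\supsetneq Q_{j,k}$ that confines the mixed terms to the exceptional set is precisely the localization hypothesis of that lemma, and your good-part Chebyshev step is the other half. So your route is legitimate, just longer.

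On the obstacle you flag---non-cancellative output---the paper's resolution is shorter than the Burkholder/Doob workaround you anticipate. By definition at least two of the Haar functions in $P_{\bm{\epsilon}}^{\bm{\alpha}}$ (respectively at least one among $\alpha_2,\ldots,\alpha_{m+2}$ in $\pi_{g,\bm{\epsilon}}^{\bm{\alpha}}$) are cancellative; since the target strong estimate \eqref{EqnRed} sits in the Banach range and the truncated $(m{+}1)$-linear form $\langle T^J(f_1,\ldots,f_m),f_{m+1}\rangle$ is fully symmetric in the $m{+}1$ slots, the paper argues that by duality one may assume the output slot carries a cancellative Haar function and then run the $T_\sharp\le M^d(T)$ argument directly. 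No separate martingale-maximal estimate is needed.
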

%\begin{lemma}
%Let $b\in BMO^d,$ and $\epsilon =(\epsilon_I)_{I\in \D}$ be bounded. Let $T \in \left\{P^{\boldsymbol{\alpha}}, T_\epsilon^{\boldsymbol{\alpha}}\right\}$ with $\boldsymbol{\alpha} \in U_m,$ or $T = \pi_b^{\boldsymbol{\alpha}}$ with $\boldsymbol{\alpha} \in \{0,1\}^m.$  Then
%$$ \left\Vert T_\sharp(\bm{f}) \right\Vert_{L^{\frac{1}{m},\infty}} \lesssim \prod_{i=1}^m\left\Vert f_i \right\Vert_{L^{1}}, $$
%where $T_\sharp(\bm{f})$ is the maximal truncation given by
%\begin{equation}\label{maxtrun} T_\sharp(\bm{f}) := \sup_{J \in \D} \left\vert \sum_{\substack {I\in \D \\ I \supsetneq J}} \delta_I^T \prod_{i=1}^m f_i(I,\alpha_i) h_I^{\sigma(\bm{\alpha}, T)}  \right\vert,\end{equation}
%with
%$$\delta_I^T = 
%     \begin{cases}
%       1, &\text{if }T = P^{\boldsymbol{\alpha}}\\
%       \widehat{b}(I), \;\;\; & \text{if } T = \pi_b^{\boldsymbol{\alpha}}\\
%			\epsilon_I & \text{if }T = T_\epsilon^{\boldsymbol{\alpha}}
%            \end{cases},$$
%						and
%$$\sigma(\boldsymbol{\alpha}, T) = 
%     \begin{cases}
%       \sigma(\boldsymbol{\alpha}), &\text{if }T = P^{\boldsymbol{\alpha}} \text{ or } T_\epsilon^{\boldsymbol{\alpha}}\\
%       \sigma(\boldsymbol{\alpha}) + 1, & \text{if }T = \pi_b^{\boldsymbol{\alpha}}
%			            \end{cases}.$$
%\end{lemma}
\begin{proof} It is easy to see that $T_\sharp$ is multi-sublinear, and that $T_\sharp(\bm{f})$ is supported on $I\in\D$ if $f_j = h_I$ for some $j$ where $h_I$ denotes any cancellative Haar function associated to $I$. Hence by Lemma 3.5 in \cite{Kunwar}, it suffices to prove
\begin{equation}\label{EqnRed}
\left\Vert T_\sharp(\bm{f}) \right\Vert_{L^{p}} \lesssim \prod_{j=1}^m\left\Vert f_j \right\Vert_{L^{p_j}},
\end{equation}for some $1 <p_j, p < \infty$ with $\sum_{j=1}^m\frac{1}{p_j} = \frac{1}{p}$. (Note that Lemma 3.5 of \cite{Kunwar} deals only with the case $\mathbb{R}$, however it is easy to see that the argument extends to $\mathbb{R}^n$.)

We claim that
\begin{equation}\label{EqnClaim}
T_\sharp(\bm{f})(x) \leq M(T(\bm{f}))(x) \quad \text{ for every } x \in \mathbb{R}^n,
\end{equation}where $M$ denotes the Hardy-Littlewood maximal function. It is easy to see that (\ref{EqnClaim}) implies (\ref{EqnRed}) immediately since operator $T$ is bounded. To see (\ref{EqnClaim}), fix a $J\in\mathcal{D}$ and a point $x\in\mathbb{R}^n$. Let $J'$ be the smallest dyadic cube that contains both $x$ and $J$ properly. Note that if there is no such $J'$ existing, then $T^J(\bm{f})(x)=0$ which trivially implies (\ref{EqnClaim}). Denote also by $J''$ the child of $J'$ that contains $x$. Without loss of generality, we assume that $\alpha_{m+1}\neq\vec{1}$ if $T=P_{\bm{\epsilon}}^{\bm{\alpha}}$ and $\alpha_{m+2}\neq\vec{1}$ if $T=\pi_{g,\bm{\epsilon}}^{\bm{\alpha}}$. This is always possible by duality, since the desired estimate (\ref{EqnRed}) is inside the Banach range hence symmetric and we have assumed in the definition that some of the Haar functions in both cases need to be cancellative. Therefore, take $T=P_{\bm{\epsilon}}^{\bm{\alpha}}$ as an example, one has
\[
\begin{split}
T^J(\bm{f})(x)&=\sum_{\substack{I\in\mathcal{D}\\ I\supset J'}}\epsilon_I\La f_1,h_I^{\alpha_1}\Ra \cdots \La f_m,h_I^{\alpha_m}\Ra h_I^{\alpha_{m+1}}(x)|I|^{-(m-1)/2}\\
&=\left(\sum_{\substack{I\in\mathcal{D}\\ I\supsetneq J''}}\epsilon_I\La f_1,h_I^{\alpha_1}\Ra \cdots \La f_m,h_I^{\alpha_m}\Ra h_I^{\alpha_{m+1}}(x)|I|^{-(m-1)/2}\right)\chi_{J''}(x)=\La T(\bm{f})\Ra_{J''}\chi_{J''}(x),
\end{split}
\]which concludes the proof of (\ref{EqnClaim}). The case with $T=\pi_{g,\bm{\epsilon}}^{\bm{\alpha}}$ follows similarly.
\end{proof}
\begin{remark}
Another consequence of Lemma \ref{LemTrunc} is that for any given $\bm{f}$, dyadic operator $T=P_{\bm{\epsilon}}^{\bm{\alpha}}$ or $\pi_{g,\bm{\epsilon}}^{\bm{\alpha}}$ has a pointwise sparse bound
\[
|T(\bm{f})|\leq C\mathcal{A}_{\mathcal{S}}(\bm{f}):=\sum_{I\in\mathcal{S}}\left(\prod_{j=1}^m \La |f_j|\Ra_I\right)\chi_I,
\]which entails boundedness properties of $T$ such as quantitative multilinear weighted estimates via standard deduction.
\end{remark}

\section*{Acknowledgments} 
The authors would like to thank Michael Lacey for having made this collaboration possible. The authors are also grateful to Rodolfo Torres for kindly pointing out several inaccuracies of references in an earlier version of the paper. The second author was in residence at the Mathematical Sciences Research Institute in Berkeley, California, during the Spring 2017 semester when this work was carried out, and was supported by the National Science Foundation under Grant No. DMS-1440140.

\bibliography{HigherOrdComm}{}
\bibliographystyle{amsplain}
\end{document}